\newtheorem{prethm}{{\bf Theorem}}
\newenvironment{theorem}{\begin{prethm}{\hspace{-0.5
em}{\bf.}}}{\end{prethm}}
\newtheorem{preobs}{{\bf Observation}}
\newenvironment{obs}{\begin{preobs}{\hspace{-0.5
em}{\bf.}}}{\end{preobs}}
\newtheorem{preex}{{\bf Example}}
\newtheorem{prelem}{{\bf Theorem}}
\newenvironment{lem}{\begin{prelem}{\hspace{-0.5
em}{\bf.}}}{\end{prelem}}
\newtheorem{prepro}{{\bf Proposition}}
\newenvironment{pro}{\begin{prepro}{\hspace{-0.5
em}{\bf.}}}{\end{prepro}}
\newtheorem{preprop}{{\bf Proposition}}
\newenvironment{prop}{\begin{preprop}{\hspace{-0.5
em}{\bf.}}}{\end{preprop}}
\newtheorem{preques}{{\bf Question}}
\newenvironment{ques}{\begin{preques}{\hspace{-0.5
em}{\bf.}}}{\end{preques}}
\newtheorem{precor}{{\bf Corollary}}
\newenvironment{cor}{\begin{precor}{\hspace{-0.5
em}{\bf.}}}{\end{precor}}
\newtheorem{prelemma}{{\bf Lemma}}
\newenvironment{lemma}{\begin{prelemma}{\hspace{-0.5
em}{\bf.}}}{\end{prelemma}}
\newtheorem{presol}{{\bf Solution}}
\newtheorem{preproof}{{\bf Proof.}}
\newenvironment{proof}[1]{\begin{preproof}{\rm
               #1}\hfill{$\rule{2mm}{2mm}$}}{\end{preproof}}
\begin{document}
\title{
{\Large{\bf Characterization of  Randomly $\bf k$-Dimensional
Graphs}}}
%\\ cncm23.tex }}}
%

{\small
\author{
{\sc Mohsen Jannesari} and {\sc Behnaz Omoomi  }\\
[1mm]
{\small \it  Department of Mathematical Sciences}\\
{\small \it  Isfahan University of Technology} \\
{\small \it 84156-83111, Isfahan, Iran}}

%@@@@@@@@@@@@@@@@@@@@@@@@@@@@@@@@@@@@@@@@@@@@@@@@@@@@@@

%\subjclass[2000]{05C15.}

%\keywords{ $b$-chromatic number,  $b$-coloring, dominating coloring.\\

%\footnotetext[1]{The research  is supported by Isfahan University
%of Technology.}
 \maketitle \baselineskip15truept

\begin{abstract}
For an ordered set $W=\{w_1,w_2,\ldots,w_k\}$ of vertices and a
vertex $v$ in a connected graph~$G$, the ordered $k$-vector
$r(v|W):=(d(v,w_1),d(v,w_2),\ldots,d(v,w_k))$ is  called  the
(metric) representation of $v$ with respect to $W$, where $d(x,y)$
is the distance between the vertices $x$ and $y$. The set $W$ is
called  a resolving set for $G$ if distinct vertices of $G$ have
distinct representations with respect to $W$. A minimum resolving
set for $G$ is a basis of $G$ and its cardinality is the metric
dimension of $G$. The resolving number of a connected graph $G$ is
the minimum $k$, such that every $k$-set of vertices of $G$ is a
resolving set. A connected graph $G$ is called randomly
$k$-dimensional if each $k$-set of vertices of $G$ is a basis. In
this paper, along with some properties of randomly $k$-dimensional graphs,
 we prove that a connected graph $G$ with at least two
vertices is  randomly $k$-dimensional if and only if $G$ is
complete graph $K_{k+1}$ or an odd cycle.
\end{abstract}
{\bf Keywords:}  Resolving set; Metric dimension; Basis; Resolving
number; Basis number; Randomly $k$-dimensional graph.
%%%%%%%%%%%%%%%%%%%%%%%%%%%%%%%%%%%%%%%%%%%%%%%%%%%%%%%%%%%%%%%%%%%%%%%%%%%%%%%%%%%%
%%%%%%%%%%%%%%%%%%%%%%%%%%%%%%%%%%%%%%%%%%%%%%%%%%%%%%%%%%%%%%%%%%%%%%%%%%%%%%%%%%%%%%%
\section{Preliminaries}
In this section, we present some definitions and known results
which are necessary  to prove our main theorems. Throughout this
paper, $G=(V,E)$ is a finite, simple, and connected  graph with
$e(G)$ edges. The distance between two vertices $u$ and $v$,
denoted by $d(u,v)$, is the length of a shortest path between $u$
and $v$ in $G$. The {\it eccentricity} of a vertex $v\in V(G)$ is
$e(v)=\max_{u\in V(G)}d(u,v)$ and the {\it diameter} of $G$ is
$\max_{v\in V(G)}e(v)$. We use $\Gamma_i(v)$ for the set of all
vertices $u\in V(G)$ with $d(u,v)=i$. Also, $N_G(v)$ is the set of
all neighbors of vertex $v$ in $G$ and $\deg_{_G}(v)=|N_G(v)|$ is the
{\it degree} of vertex $v$. For a set $S\subseteq V(G)$,
$N_G(S)=\bigcup_{v\in S}N_G(v)$. If $G$ is clear from the context, it
is customary to write $N(v)$ and $\deg(v)$ rather than $N_G(v)$ and
$\deg_{_G}(v)$, respectively. The {\it maximum degree} and {\it
minimum degree} of $G$, are denoted by $\Delta(G)$ and
$\delta(G)$, respectively. For a subset $S$
of $V(G)$, $G\setminus S$ is the induced subgraph $\langle
V(G)\setminus S\rangle$ of $G$. A set
$S\subseteq V(G)$ is a {\it separating set} in $G$ if $G\setminus
S$ has at least two components. Also, a set $T\subseteq E(G)$ is
an {\it edge cut} in $G$ if $G\setminus T$ has at least two
components. A graph $G$ is $k$-(edge-)connected if the minimum
size of a separating set (edge cut) in $G$ is at least $k$. We mean by
$\omega(G)$, the number of vertices in a maximum clique in $G$.
The notations $u\sim v$ and $u\nsim v$ denote the adjacency and
non-adjacency relations between $u$ and $v$, respectively. The
symbols $(v_1,v_2,\ldots, v_n)$ and $(v_1,v_2,\ldots,v_n,v_1)$
represent a path of order $n$, $P_n$, and a cycle of order $n$,
$C_n$, respectively.
\par For an ordered set
$W=\{w_1,w_2,\ldots,w_k\}\subseteq V(G)$ and a vertex $v$ of $G$,
the  $k$-vector
$$r(v|W):=(d(v,w_1),d(v,w_2),\ldots,d(v,w_k))$$
is called  the ({\it metric}) {\it representation}  of $v$ with
respect to $W$. The set $W$ is called a {\it resolving set} for
$G$ if distinct vertices have different representations. In this
case, we say  set $W$ resolves $G$. To see that whether a given
set $W$ is a resolving set for $G$, it is sufficient to look at
the representations of vertices in $V(G)\backslash W$, because
$w\in W$ is the unique vertex of $G$ for which $d(w,w)=0$. A
resolving set $W$ for $G$ with minimum cardinality is  called a
{\it basis} of $G$, and its cardinality is the {\it metric
dimension} of $G$, denoted by $\beta(G)$.  The concept of (metric)
representation is introduced by Slater~\cite{Slater1975}
(see~\cite{Harary}). For more results related to these concepts
see~\cite{trees,cartesian product,bounds,sur1,cayley
digraphs,landmarks,sur2}.
\par We say an ordered set $W$ {\it resolves} a set $T$ of vertices
in $G$, if the representations of vertices in $T$ are distinct
with respect to $W$. When $W=\{x\}$, we say that  vertex $x$
resolves $T$.  The following simple result is very useful.
\begin{obs}~{\rm\cite{extermal}}\label{twins}
Suppose that $u,v$ are vertices in $G$ such that
$N(v)\backslash\{u\}=N(u)\backslash\{v\}$ and $W$ resolves $G$.
Then $u$ or $v$ is in $W$. Moreover, if $u\in W$ and $v\notin W$,
then $(W\setminus\{u\})\cup \{v\}$ also resolves $G$.
\end{obs}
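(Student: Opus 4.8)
The hypothesis $N(v)\setminus\{u\}=N(u)\setminus\{v\}$ says precisely that $u$ and $v$ are \emph{twin} vertices (true twins if $u\sim v$, false twins otherwise). The plan is to isolate the single structural fact that drives both assertions, namely that the transposition $\phi\colon V(G)\to V(G)$ swapping $u$ and $v$ and fixing every other vertex is an automorphism of $G$. To verify this I need only check that $\phi$ preserves adjacency, and the only nontrivial pairs are those of the form $\{x,u\}$ with $x\notin\{u,v\}$: there $x\sim u \iff x\in N(u)\setminus\{v\} \iff x\in N(v)\setminus\{u\} \iff x\sim v$, which is exactly $\phi(x)\sim\phi(u)$. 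Pairs avoiding $\{u,v\}$ are fixed by $\phi$, and the pair $\{u,v\}$ is preserved trivially, so $\phi\in\mathrm{Aut}(G)$ regardless of whether $u$ and $v$ are adjacent.

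Since automorphisms preserve distances, $\phi$ immediately yields the key equidistance property: for every $x\in V(G)\setminus\{u,v\}$ one has $d(x,u)=d(\phi(x),\phi(u))=d(x,v)$, using $\phi(x)=x$ and $\phi(u)=v$. The first assertion then follows by contradiction. If $u,v\notin W$, then every $w\in W$ satisfies $w\neq u,v$, so $d(u,w)=d(v,w)$ for all $w\in W$; hence $r(u|W)=r(v|W)$ with $u\neq v$, contradicting that $W$ resolves $G$. Therefore $u\in W$ or $v\in W$.

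For the second assertion, write $W_0=W\setminus\{u\}$, so that $W=W_0\cup\{u\}$ and $W'=W_0\cup\{v\}$ with $u,v\notin W_0$, and order both sets so that $u$ (respectively $v$) occupies the last coordinate. The crux is the identity $r(\phi(x)|W')=r(x|W)$ for every vertex $x$: coordinate by coordinate, $d(\phi(x),w)=d(\phi(x),\phi(w))=d(x,w)$ for $w\in W_0$ (as $\phi$ fixes each such $w$), and $d(\phi(x),v)=d(\phi(x),\phi(u))=d(x,u)$. Because $\phi$ is a bijection and $x\mapsto r(x|W)$ is injective (as $W$ resolves $G$), the composite $\phi(x)\mapsto r(\phi(x)|W')=r(x|W)$ is injective; since $\phi(x)$ ranges over all of $V(G)$, this says exactly that $x'\mapsto r(x'|W')$ is injective, i.e.\ $W'$ resolves $G$.

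I expect the only real care to lie in this last step, where one must not conflate the roles of $u$ and $v$: the transfer works precisely because $\phi$ moves $u$ into the coordinate occupied by $v$ in $W'$ while fixing the common block $W_0$. A more pedestrian, automorphism-free route to the second assertion is a direct case analysis — assume $r(a|W')=r(b|W')$ for distinct $a,b$, observe neither can equal $v$, and use the equidistance property to produce a collision $r(a'|W)=r(b'|W)$ — but this fragments according to whether $a$ or $b$ equals $u$, whereas the automorphism argument packages all cases uniformly, which is why I would prefer it.
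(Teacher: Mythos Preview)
Your argument is correct. The transposition $\phi$ swapping $u$ and $v$ is indeed an automorphism of $G$ under the twin hypothesis, and the identity $r(\phi(x)\mid W')=r(x\mid W)$ cleanly transfers injectivity from $W$ to $W'$; both assertions follow as you describe.

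As for comparison with the paper: the paper does not supply a proof of this observation at all. It is stated with a citation to \cite{extermal} and used as a black box, so there is no ``paper's own proof'' to compare against. Your automorphism-based argument is a perfectly standard way to establish the result, and the alternative case-analysis route you sketch at the end is the other common approach; either would be acceptable as a self-contained justification.
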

Let $G$ be a graph of order $n$. It is obvious that
$1\leq\beta(G)\leq n-1$. The following theorem characterize all
graphs $G$ with $\beta(G)=1$ and $\beta(G)=n-1$.
\begin{lem}~\rm\cite{Ollerman}\label{B=1,B=n-1}
Let $G$ be a graph of order $n$. Then,\begin{description}\item (i)
$\beta(G)=1$ if and only if $G=P_n$,\item (ii) $\beta(G)=n-1$ if
and only if $G=K_n$.
\end{description}
\end{lem}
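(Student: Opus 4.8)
The plan is to prove each equivalence by establishing the easy implication directly and obtaining the converse by a counting/structural argument. For part (i), if $G=P_n$ then an endpoint $w$ realizes the $n$ distinct distance values $0,1,\ldots,n-1$, so $\{w\}$ resolves $G$ and $\beta(G)=1$. Conversely, suppose $\{w\}$ is a basis. Then the $n$ vertices have pairwise distinct distances to $w$, all lying in $\{0,1,\ldots,e(w)\}$; since $e(w)\le n-1$, these distances must be exactly $0,1,\ldots,n-1$, forcing each set $\Gamma_i(w)$ to be a singleton $\{v_i\}$ for $0\le i\le n-1$ (with $v_0=w$). A shortest $w$--$v_{n-1}$ path has length $n-1$, hence passes through all $n$ vertices, so it is the Hamiltonian path $(v_0,v_1,\ldots,v_{n-1})$ and $G$ contains $P_n$ as a spanning subgraph. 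Finally, any edge $v_iv_j$ satisfies $|i-j|\le 1$ by the triangle inequality applied to distances from $w$, and since the levels are singletons this forces $|i-j|=1$; thus the only edges are those of the spanning path and $G=P_n$.

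For part (ii), if $G=K_n$ then any two vertices outside a set $W$ share the all-ones representation, so $W$ can resolve $G$ only when $|W|\ge n-1$, giving $\beta(K_n)=n-1$. For the converse I would prove the contrapositive: if $G\ne K_n$ then $\beta(G)\le n-2$. Call $\{u,v\}$ a \emph{distance-twin pair} if $d(u,w)=d(v,w)$ for every $w\notin\{u,v\}$. If some pair $\{u,v\}$ is \emph{not} a distance-twin pair, then $W=V(G)\setminus\{u,v\}$ resolves $G$: since one need only check vertices outside $W$, it suffices that $r(u|W)\ne r(v|W)$, and this holds because some $w\in W$ distinguishes $u$ from $v$. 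This immediately yields $\beta(G)\le n-2$, as required.

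The crux is therefore to show that a connected non-complete graph on $n\ge 3$ vertices always possesses a pair that is not a distance-twin pair. I would argue by contradiction: if every pair were a distance-twin pair, then for any three distinct vertices $u,v,x$, applying the twin condition to the three pairs among them forces $d(u,v)=d(u,x)=d(v,x)$, so every triple is equilateral. Propagating this across the connected graph from any edge (whose endpoints are at distance $1$) makes all pairwise distances equal to $1$, i.e. $G=K_n$, contradicting the hypothesis; the case $n=2$ is immediate since the only connected graph is $K_2=P_2$. This last step is the main obstacle, namely ruling out the degenerate possibility that every vertex pair is indistinguishable by third vertices, and the equilateral-triple observation is exactly what disposes of it. Note that Observation~\ref{twins} shows a genuine twin pair must meet every resolving set, which is precisely the obstruction the argument sidesteps by first locating a non-twin pair.
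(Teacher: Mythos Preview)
Your proof is correct. Both directions of (i) are handled cleanly: the level-set argument showing each $\Gamma_i(w)$ is a singleton, combined with the observation that any edge must join consecutive levels, pins down $G=P_n$. For (ii), your contrapositive is sound, and the ``equilateral triple'' step is valid: if every pair were a distance-twin pair, then for distinct $u,v,x$ the twin conditions on $\{u,v\}$ and $\{u,x\}$ give $d(u,x)=d(v,x)$ and $d(u,v)=d(x,v)$, so all three pairwise distances coincide; propagating from any edge then forces $G=K_n$.

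As for comparison with the paper: there is nothing to compare against. The paper does not prove Theorem~\ref{B=1,B=n-1}; it merely quotes it from~\cite{Ollerman} as a known preliminary result and uses it later (e.g.\ after Theorem~\ref{bonds on k}) to dispose of the cases $k=n-1$ and $k=1$. Your write-up therefore supplies a self-contained argument where the paper defers to the literature. The standard proof in~\cite{Ollerman} for (ii) is slightly more direct---one simply picks a non-adjacent pair $u,v$ in $G\neq K_n$, takes any common neighbour on a shortest $u$--$v$ path, and observes that it distinguishes $u$ from $v$, so $V(G)\setminus\{u,v\}$ resolves---but your twin-pair formulation reaches the same conclusion and is equally elementary.
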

\par The {\it basis number}  of $G$, $bas(G)$, is the largest integer $r$
such that every $r$-set of vertices of $G$ is a subset of some
basis of $G$. Also, the {\it resolving number}  of $G$, $res(G)$,
is the minimum $k$ such that every $k$-set of vertices of $G$ is a
resolving set for $G$. These parameters are introduced
in~\cite{basis} and ~\cite{res(G)}, respectively. Clearly, if $G$
is a graph of order $n$, then $0\leq bas(G)\leq \beta(G)$ and
$\beta(G)\leq res(G)\leq n-1$. Chartrand et al.~\cite{basis}
considered graphs $G$ with $bas(G)=\beta(G)$. They called these
graphs {\it randomly $k$-dimensional}, where $k=\beta(G)$. Obviously,
$bas(G)=\beta(G)$ if and only if $res(G)=\beta(G)$. In other word,
a graph $G$ is randomly $k$-dimensional if each $k$-set of
vertices of $G$ is a basis of $G$.
\par The following properties of randomly $k$-dimensional
graphs are proved in~\cite{On randomly K-dimensional}.
\begin{prop}\label{not twin}~\rm\cite{On randomly K-dimensional}
If $G\neq K_n$ is a randomly
$k$-dimensional graph, then  for each pair of vertices $u,v\in
V(G)$, $N(v)\backslash\{u\}\neq N(u)\backslash\{v\}$.
\end{prop}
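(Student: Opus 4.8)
The plan is to argue by contradiction using Observation~\ref{twins} together with a simple counting argument. Suppose $G\neq K_n$ is randomly $k$-dimensional but there is a pair $u,v\in V(G)$ with $N(v)\backslash\{u\}=N(u)\backslash\{v\}$. Since $G$ is randomly $k$-dimensional, $\beta(G)=k$ and every $k$-set of vertices of $G$ is a basis, hence in particular a resolving set for $G$. By Observation~\ref{twins}, every resolving set of $G$ must contain $u$ or $v$; so every $k$-subset of $V(G)$ meets $\{u,v\}$.

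The key point is that this forces $V(G)\setminus\{u,v\}$ to have fewer than $k$ vertices. Indeed, since $G\neq K_n$, Theorem~\ref{B=1,B=n-1}(ii) gives $\beta(G)\leq n-2$, that is, $k\leq n-2$, so $|V(G)\setminus\{u,v\}|=n-2\geq k$. Therefore we may choose a $k$-set $W\subseteq V(G)\setminus\{u,v\}$. This $W$ contains neither $u$ nor $v$, contradicting the consequence of Observation~\ref{twins} above. Hence no such pair $u,v$ exists.

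The only place where the hypothesis $G\neq K_n$ is used — and the only mildly delicate step — is deducing $k\leq n-2$ from Theorem~\ref{B=1,B=n-1}(ii), which is exactly what guarantees enough room outside the twin pair to build a $k$-set avoiding both $u$ and $v$. Everything else is an immediate application of Observation~\ref{twins} and the definition of a randomly $k$-dimensional graph, so I do not anticipate any real obstacle; the argument is short and self-contained once these two ingredients are in place.
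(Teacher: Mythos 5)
Your proof is correct: Observation~\ref{twins} forces every resolving set to meet $\{u,v\}$, randomly $k$-dimensionality makes every $k$-set resolving, and $G\neq K_n$ together with Theorem~\ref{B=1,B=n-1}(ii) gives $k=\beta(G)\leq n-2$, so a $k$-set avoiding both $u$ and $v$ exists and yields the contradiction. Note that this paper only cites Proposition~\ref{not twin} from the companion paper without reproducing a proof, so there is nothing in-text to compare against; your argument is the natural self-contained derivation from the tools already stated in the paper and is valid as written.
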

\begin{lem}\label{2connected}~\rm\cite{On randomly K-dimensional}
 If $k\geq2$, then every randomly $k$-dimensional graph
is $2$-connected.
\end{lem}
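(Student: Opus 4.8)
\noindent\emph{Proof idea.} The plan is to argue by contradiction: I would suppose that $G$ is randomly $k$-dimensional with $k\ge2$ but is \emph{not} $2$-connected, and then produce a single $k$-set of vertices that fails to resolve $G$, contradicting the fact that every $k$-set is a basis. To set up, note that $\beta(G)=k\ge2$ gives $G\neq P_n$ by Theorem~\ref{B=1,B=n-1}(i), and $\beta(G)=k\le n-1$ forces $n:=|V(G)|\ge k+1\ge3$; since $G$ is connected on at least three vertices but not $2$-connected, it has a cut vertex $c$. Let $G_1,\ldots,G_t$ with $t\ge2$ be the components of $G-c$.

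The engine of the argument is a distance observation. If $A$ is one of the components $G_i$ and $u,v\in A$ satisfy $d(u,c)=d(v,c)$, then every shortest path from a vertex $w\notin A$ to $u$ or to $v$ runs through $c$, so $d(u,w)=d(u,c)+d(c,w)=d(v,c)+d(c,w)=d(v,w)$. Hence no vertex outside $A$ resolves $\{u,v\}$, and \emph{only vertices of $A$ can do so}; I will call such a pair a \emph{hard pair} of $A$. The next step is to show every component contains a hard pair. Reading off the distance layers $\Gamma_1(c)\cap A,\Gamma_2(c)\cap A,\ldots$, either some layer holds two vertices (a hard pair), or all distances to $c$ are distinct; in the latter case these distances are exactly $1,2,\ldots,|A|$, and the unique vertex $p$ at distance $|A|$ has all its neighbours in the single layer at distance $|A|-1$, so $\deg_G(p)=1$.

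With a hard pair available, the contradiction is immediate when there is room to place a landmark set avoiding its resolvers. I would take $A$ to be a \emph{smallest} component, so $|A|\le(n-1)/2$, fix a hard pair $\{u,v\}\subseteq A$, and let $R\subseteq A$ be the set of vertices resolving it (note $u,v\in R$). Any $k$-set $W$ with $W\cap R=\emptyset$ then places both $u,v$ outside $W$ and unresolved, so $W$ is not a resolving set; such a $W$ exists precisely when $n-|R|\ge k$. Since $R\subseteq A$, this holds whenever $|A|\le n-k$, hence certainly whenever $(n-1)/2\le n-k$, i.e.\ $n\ge 2k-1$. Thus in the range $n\ge 2k-1$ a smallest component already yields a non-resolving $k$-set, the desired contradiction.

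I expect the main obstacle to be the two degenerate situations the above skips. First, the leaf case $\deg_G(p)=1$: two degree-one vertices with the common neighbour $c$ are twins and are excluded by Proposition~\ref{not twin}, while a lone degree-one vertex has a support that is itself a cut vertex, so I would re-root the analysis at that support and apply the hard-pair argument to one of its other components. Second, and harder, is the dense range $k+2\le n\le 2k-2$ (the case $n=k+1$ forces $\beta(G)=n-1$, so $G=K_{k+1}$ by Theorem~\ref{B=1,B=n-1}(ii), which is $2$-connected), where a smallest component may be too large for its resolvers to be dodged. To close this I would bound $\beta(G)$ from above through the cut vertex: deleting the interior of any shortest $u$--$w$ path of length $\ell$ and keeping one endpoint shows $\beta(G)\le n-\ell$, and splicing a farthest-from-$c$ geodesic of $G_1$ to one of $G_2$ gives $\beta(G)\le n-(d_1+d_2)$ with $d_i=\max_{x\in G_i}d(x,c)$; combined with Proposition~\ref{not twin}, which forbids exactly the twin-rich graphs (stars and cliques attached at $c$) that alone push $\beta$ so near $n$, I expect these estimates to exclude a cut vertex in this narrow range as well. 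Verifying that no twin-free graph with a cut vertex survives $k+2\le n\le 2k-2$ is, I anticipate, the delicate part of the whole proof.
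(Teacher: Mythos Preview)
The present paper does not contain a proof of this statement: it is quoted verbatim from the authors' companion paper~\cite{On randomly K-dimensional} and used here as a black box (as are Theorems~\ref{|T|=k-1} and~\ref{at most k-1 common neighbor} and Proposition~\ref{not twin}). So there is no ``paper's own proof'' to compare your attempt against.

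On the substance of your proposal: the central mechanism is sound. The observation that a pair $\{u,v\}$ in a single component $A$ of $G-c$ with $d(u,c)=d(v,c)$ can be resolved only from within $A$ is correct, and is equivalent (via Proposition~\ref{bounds on edgs of R(G)}) to saying that any component of $G-c$ containing such a ``hard pair'' must have at least $n-k+1$ vertices. From this one gets immediately that if \emph{two} components each contain a hard pair then $n-1\ge 2(n-k+1)$, i.e.\ $n\le 2k-3$; so for $n\ge 2k-2$ some component has no hard pair and hence, by your layer argument, carries a leaf. This is slightly sharper than the threshold $n\ge 2k-1$ you obtained from the smallest-component estimate, but the shape of the argument is the same.

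The genuine gaps are exactly where you flag them. First, your treatment of the leaf case is only a sketch: ``re-root at the support'' can recurse along a pendant path, and you need to argue that because $G$ is not a path this walk terminates at a vertex of degree $\ge 3$, at which point a hard pair is available in a bounded component---you have not written this down. Second, and more seriously, the dense range $k+2\le n\le 2k-3$ (nonempty once $k\ge 5$) is not closed by what you propose. The diameter bound $\beta(G)\le n-(d_1+d_2)$ yields only $d_1+d_2\le n-k\le k-3$, and invoking Proposition~\ref{not twin} to exclude ``twin-rich'' configurations is a hope, not an argument: one must actually show that a twin-free connected graph with a cut vertex cannot have $\beta(G)$ this close to $n$, and you have not done so. Until that verification is carried out, the proof is incomplete.
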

\begin{lem}\label{|T|=k-1}~\rm\cite{On randomly K-dimensional}
If $G$ is a randomly $k$-dimensional graph and $T$ is a separating
set of $G$ with $|T|=k-1$, then $G\setminus T$ has exactly two
components. Moreover, for each pair of vertices $u,v\in V(G)\setminus T$
with $r(u|T)=r(v|T)$, $u$ and $v$ belong to different components.
\end{lem}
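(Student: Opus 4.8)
My plan is to extract both statements from a single identity: for any two distinct vertices $u,v\in V(G)\setminus T$ with $r(u\mid T)=r(v\mid T)$, the bisector $B:=\{x\in V(G):d(x,u)=d(x,v)\}$ equals $T$. Indeed $T\subseteq B$, since $r(u\mid T)=r(v\mid T)$ means exactly that $d(t,u)=d(t,v)$ for every $t\in T$. Conversely $B$ contains no $k$-set $W$: such a $W$ would satisfy $r(u\mid W)=r(v\mid W)$ with $u,v\notin W$, hence fail to resolve $G$, contradicting that every $k$-set of a randomly $k$-dimensional graph is a basis. So $|B|\le k-1=|T|$, and therefore $B=T$.

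The second ingredient is that distances between different components of $G\setminus T$ factor through $T$. If $z$ lies in a component of $G\setminus T$ other than the one containing $u$, then every $u$--$z$ path meets $T$, so choosing $t\in T$ on a shortest $u$--$z$ path gives $d(u,z)\ge d(u,t)+d(t,z)$; combined with the automatic inequality $d(u,z)\le d(u,t')+d(t',z)$ for all $t'\in T$, this yields $d(u,z)=\min_{t'\in T}\bigl(d(u,t')+d(t',z)\bigr)$, and likewise $d(v,z)=\min_{t'\in T}\bigl(d(v,t')+d(t',z)\bigr)$ whenever $z$'s component also differs from that of $v$. Since $r(u\mid T)=r(v\mid T)$ gives $d(u,t')=d(v,t')$ for all $t'\in T$, these minima coincide, so $d(u,z)=d(v,z)$; that is, $z\in B=T$, which is impossible because $z\notin T$.

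Both conclusions then follow. For the ``moreover'' clause I would argue: were $u$ and $v$ in a common component, then, $T$ being separating, some other component would exist, and any vertex $z$ of it would have a component different from those of both $u$ and $v$ --- exactly the situation just excluded; hence $u$ and $v$ lie in different components. For the first clause: $G\setminus T$ has at least two components because $T$ is separating, and were there three or more, then, since $|T|=k-1<k=\beta(G)$, the set $T$ would fail to resolve $G$ and thereby produce distinct vertices $u,v\in V(G)\setminus T$ with $r(u\mid T)=r(v\mid T)$; by the ``moreover'' clause these lie in two distinct components, and a vertex $z$ of a third component would then satisfy $z\in T$, a contradiction.

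I expect the substance to be concentrated entirely in the first step --- recognising that equal $T$-representations force the bisector to be precisely $T$, which is what makes the existence of even one further equidistant vertex fatal. After that, the factorization of distances through the cut makes every vertex of a ``foreign'' component equidistant from $u$ and $v$, delivering the contradiction with no computation. The one point that requires care is to invoke the min-formula for $d(u,z)$ only when $u$ and $z$ genuinely lie in different components, so that each $u$--$z$ path is indeed forced through $T$.
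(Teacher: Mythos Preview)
Your argument is correct. Note, however, that the present paper does not prove this statement: it is quoted from~\cite{On randomly K-dimensional} and stated without proof, so there is no ``paper's own proof'' to compare against. Your bisector approach is a clean, self-contained proof of the result.

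Two minor remarks on presentation. First, when you assert that a $k$-subset $W\subseteq B$ satisfies $u,v\notin W$, this is true but worth one line of justification: since $u\neq v$ we have $d(u,u)=0\neq d(u,v)$, so $u\notin B$, and symmetrically $v\notin B$; hence $u,v\notin W$. Second, in deducing the first clause you use that $T$ is not a resolving set (because $|T|=k-1<\beta(G)$) to produce distinct $u,v$ with $r(u\mid T)=r(v\mid T)$; it is worth noting explicitly that such $u,v$ automatically lie in $V(G)\setminus T$, since any vertex of $T$ has a zero coordinate in its $T$-representation that no other vertex can match. Both points are routine, and with them your proof is complete.
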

\begin{lem}\label{at most k-1 common neighbor}~\rm\cite{On randomly K-dimensional}
 If $res(G)=k$,
 then each two vertices of $G$
 have at most $k-1$ common neighbors.
\end{lem}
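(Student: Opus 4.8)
The plan is to argue by contradiction, applying the definition of $res(G)$ directly. Assume, to the contrary, that some pair of distinct vertices $u,v\in V(G)$ has at least $k$ common neighbors, and fix $k$ of them, say $w_1,w_2,\ldots,w_k$. I would then form the $k$-set $W=\{w_1,w_2,\ldots,w_k\}$ and show that $W$ fails to resolve $G$. This contradicts the hypothesis $res(G)=k$, since by definition that hypothesis forces \emph{every} $k$-set of vertices of $G$ to be a resolving set.

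The key step is to compare the metric representations of $u$ and $v$ with respect to $W$. Since each $w_i$ is a common neighbor of $u$ and $v$, we have $d(u,w_i)=d(v,w_i)=1$ for every $i$, so that $r(u|W)=r(v|W)=(1,1,\ldots,1)$. Because $G$ is a simple graph it has no loops, hence no vertex is adjacent to itself; consequently $u\neq w_i$ and $v\neq w_i$ for all $i$, giving $u,v\in V(G)\setminus W$. Thus $u$ and $v$ are two distinct vertices of $G$ lying outside $W$ with identical representations, so $W$ is not a resolving set, which is the desired contradiction.

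I do not expect a genuine obstacle here: the argument is an immediate consequence of the definition of $res(G)$, and the only points requiring a moment's care are verifying that the assumed common neighbors really can be taken to form a $k$-subset (which is precisely the assumption that at least $k$ common neighbors exist) and that $u$ and $v$ do not themselves belong to $W$ (guaranteed by the simplicity of $G$). Once these are settled, the coincidence $r(u|W)=r(v|W)$ is automatic and closes the proof.
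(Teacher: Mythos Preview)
Your argument is correct: choosing any $k$ common neighbors as $W$ forces $r(u|W)=r(v|W)=(1,\ldots,1)$ with $u,v\notin W$, contradicting $res(G)=k$. The paper does not supply its own proof of this statement---it is quoted from~\cite{On randomly K-dimensional}---but your proof is the natural and essentially unique one-line argument from the definition, so there is nothing to compare.
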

Chartrand et al. in \cite{basis} characterized the randomly
$2$-dimensional graphs and prove that a graph $G$ is randomly
$2$-dimensional if and only if $G$ is an odd cycle. Furthermore,
they  provided the following question.
\begin{ques}~\rm\cite{basis}\label{question}  Are there randomly $k$-dimensional graphs
other than complete graph and odd cycles?
\end{ques}
In  this paper, we prove that the answer of
Question~\ref{question} is negative and a graph $G$ is a randomly
$k$-dimensional graph with $k\geq3$ if and only if $G=K_{k+1}$.
%%%%%%%%%%%%%%%%%%%%%%%%%%%%%%%%%%%%%%%%%%%%%%%%%%%%%%%%%%%%%%%%%%%%%%%%%%%%%%%%%%%%%%%%%%%%%%%%%%%%%%%%%
%%%%%%%%%%%%%%%%%%%%%%%%%%%%%%%%%%%%%%%%%%%%%%%%%%%%%%%%%%%%%%%%%%%%%%%%%%%%%%%%%%%%%%%%%%%%%%%%%%%%%%%%%%%%%%
%%%%%%%%%%%%%%%%%%%%%%%%%%%%%%%%%%%%%%%%%%%%%%%%%%%%%%%%%%%%%%%%%%%%%%%%%%%%%%%%%%%%%%%%%%%%%%%%%%%%%%%%
\section{Some Properties of Randomly $\bf k$-Dimensional Graphs}\label{main}
Let $V_p$ denote the collection of all $n \choose 2$ pairs of
vertices of $G$. Fehr et al.~\cite{cayley digraphs} defined the
{\it resolving graph} $R(G)$  of $G$ as a bipartite graph with
bipartition $(V(G),V_p)$, where a vertex $v\in V(G)$ is adjacent
to a pair $\{x,y\}\in V_p$ if and only if $v$ resolves $\{x,y\}$
in $G$. Thus, the minimum cardinality of a subset $S$ of $V(G)$,
where $N_{_{R(G)}}(S)=V_p$  is the metric dimension of $G$.
\par In the following through some propositions and lemmas, we prove that
if $G$ is a randomly $k$-dimensional graph of order $n$ and
diameter $d$, then $k\geq {n-1\over d}$.
\begin{pro}\label{bounds on edgs of R(G)}
If $G$ is a randomly $k$-dimensional graph of order $n$, then
$${n\choose2}(n-k+1)\leq e(R(G))\leq n({n\choose2}-k+1).$$
\end{pro}
\begin{proof}{Let $z\in V_p$ and $S=\{v\in V(G)\,|\,v\nsim z\}$.
Thus, $N_{R(G)}(S)\neq V_p$ and hence, $S$ is not a resolving set
for $G$. If $\deg_{_{R(G)}} (z)\leq n-k$, then $|S|\geq k$, which
contradicts $res(G)=k$. Therefore, $\deg_{_{R(G)}} (z)\geq n-k+1$
and consequently, $ e(R(G))\geq {n\choose2}(n-k+1)$.
\par Now, let $v\in V(G)$. If $\deg_{_{R(G)}}(v)\geq
{n\choose2}-k+2$, then there are at most $k-2$ vertices in $V_p$
which are not adjacent to $v$. Let $V_p\setminus
N_{R(G)}(v)=\{\{u_1,v_1\}, \{u_2,v_2\},\ldots,\{u_t,v_t\}\}$,
where $t\leq k-2$. Note that, $u_i\sim\{u_i,v_i\}$ in $R(G)$ for
each $i$, $1\leq i\leq t$. Therefore,
$N_{R(G)}(\{v,u_1,u_2,\ldots,u_t\})=V_p$. Hence, $\beta(G)\leq
t+1\leq k-1$, which is a contradiction. Thus,
$\deg_{_{R(G)}}(v)\leq {n\choose2}-k+1$ and consequently,
$e(R(G))\leq n({n\choose2}-k+1).$ }\end{proof}
\begin{pro}\label{edgs of R(G)} If $G$ is a randomly
$k$-dimensional graph of order $n$, then for each $v\in V(G)$,
$$\deg_{_{R(G)}}(v)={n\choose2}-\sum_{i=1}^{e(v)}{|\Gamma_i(v)|\choose2}.$$
\end{pro}
\begin{proof}{Note that, a vertex $v\in V(G)$ resolves a pair
$\{x,y\}$ if and only if there exist  $0\leq i\neq j\leq e(v)$
such that $x\in \Gamma_i(v)$ and $y\in \Gamma_j(v)$. Therefore, a
vertex $\{u,w\}\in V_p$ is not adjacent to $v$ in $R(G)$ if and
only if there exists an $i,~1\leq i\leq e(v)$, such that $u,w\in
\Gamma_i(v)$. The number of such vertices in $V_p$ is
$\sum_{i=1}^{e(v)}{|\Gamma_i(v)|\choose2}.$ Therefore,
$\deg_{_{R(G)}}(v)={n\choose2}-\sum_{i=1}^{e(v)}{|\Gamma_i(v)|\choose2}.$
}\end{proof} Since $R(G)$ is bipartite, by Proposition~\ref{edgs of
R(G)},
$$e(R(G))=\sum_{v\in V(G)}[{n\choose2}-\sum_{i=1}^{e(v)}{|\Gamma_i(v)|\choose2}]\\
=n{n\choose2}-\sum_{v\in
V(G)}\sum_{i=1}^{e(v)}{|\Gamma_i(v)|\choose2}.$$ Thus, by
Proposition~\ref{bounds on edgs of R(G)},
\begin{equation}\label{bounnd}
n(k-1)\leq\sum_{v\in
V(G)}\sum_{i=1}^{e(v)}{|\Gamma_i(v)|\choose2}\leq
{n\choose2}(k-1).
\end{equation}
\begin{obs}\label{min chose} Let $n_1,...,n_r$ and $n$ be positive
integers, with $\sum_{i=1}^rn_i=n$. Then,
$\sum_{i=1}^r{n_i\choose2}$ is minimum if and only if
$|n_i-n_j|\leq1$, for each  $1\leq i,j\leq r$.
\end{obs}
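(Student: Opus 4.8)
The plan is to run a standard smoothing (exchange) argument exploiting the convexity of the map $x\mapsto{x\choose2}$. The crucial computation is the effect of shifting one unit between two parts: if $n_a\geq n_b+2$, then replacing $n_a$ by $n_a-1$ and $n_b$ by $n_b+1$ (and leaving the remaining parts untouched) again produces a tuple of positive integers summing to $n$, and
$$\left[{n_a-1\choose2}+{n_b+1\choose2}\right]-\left[{n_a\choose2}+{n_b\choose2}\right]=-(n_a-1)+n_b=n_b-n_a+1\leq-1,$$
so $\sum_{i=1}^r{n_i\choose2}$ strictly decreases. Consequently no tuple possessing a pair with $|n_a-n_b|\geq2$ can minimize $\sum{n_i\choose2}$; that is, every minimizer satisfies $|n_i-n_j|\leq1$ for all $i,j$. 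This is exactly the ``only if'' direction.

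For the ``if'' direction I would observe that the balanced tuples are all permutations of a single one, hence share a common objective value. Writing $n=qr+s$ with $0\leq s\leq r-1$ (positivity of the parts forces $n\geq r$, hence $q\geq1$), every tuple of positive integers summing to $n$ whose parts pairwise differ by at most $1$ uses only the values $q$ and $q+1$, and counting the total forces precisely $s$ parts equal to $q+1$ and $r-s$ parts equal to $q$; so each balanced tuple gives $\sum{n_i\choose2}=s{q+1\choose2}+(r-s){q\choose2}$. Since there are only finitely many feasible tuples, a minimizer exists; by the previous paragraph it is balanced, so the minimum equals this common value, and therefore every balanced tuple attains the minimum. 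Together the two paragraphs give the stated equivalence.

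I do not expect a genuine obstacle: the argument is elementary. The only points needing a little care are checking that the unit-shift move keeps every part positive --- this is where the hypothesis that the $n_i$ are positive (equivalently $n\geq r$) enters --- and treating the degenerate case $r\mid n$, in which ``balanced'' simply means all parts equal. Everything else reduces to the binomial identity displayed above.
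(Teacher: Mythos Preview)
Your argument is correct; the smoothing step and the uniqueness-up-to-permutation of balanced tuples are exactly what is needed. Note that the paper states this result as an Observation and gives no proof at all, so there is nothing to compare against --- your write-up simply supplies the elementary justification the authors left implicit.
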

\begin{lemma}\label{P1<p2} Let $n,p_1,p_2,q_1,q_2,r_1$ and $r_2$ be
positive integers, such that $n=p_iq_i+r_i$ and $r_i<p_i$, for
$1\leq i\leq 2$. If $p_1<p_2$, then
$$(p_1-r_1){q_1\choose2}+r_1{q_1+1\choose2}\geq
(p_2-r_2){q_2\choose2}+r_2{q_2+1\choose2}.$$
\end{lemma}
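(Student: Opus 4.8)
The plan is to reinterpret both sides of the inequality as minima of a single partition functional. Since $n = p_iq_i + r_i$ with $1 \le r_i < p_i$ and $q_i \ge 1$, the most balanced way to write $n$ as a sum of $p_i$ positive integers $n_1,\ldots,n_{p_i}$ uses $r_i$ parts equal to $q_i+1$ and $p_i-r_i$ parts equal to $q_i$, and by Observation~\ref{min chose} this balanced partition minimizes $\sum_j {n_j \choose 2}$; its value is exactly $(p_i-r_i){q_i \choose 2} + r_i{q_i+1 \choose 2}$. Writing $f(m)$ for the minimum of $\sum_{j=1}^{m}{n_j \choose 2}$ over all expressions $n = n_1 + \cdots + n_m$ into positive integers, the claim reduces to showing that $f(p_1) \ge f(p_2)$ whenever $p_1 < p_2$.

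To prove this I would pass from $p_1$ parts to $p_2$ parts by successive splitting. Starting from a balanced partition of $n$ into $p_1$ positive parts (so that $\sum_j {n_j \choose 2} = f(p_1)$), repeatedly replace some part of size $a \ge 2$ by two positive parts $a_1,a_2$ with $a_1 + a_2 = a$; after $p_2 - p_1$ such steps one has a partition of $n$ into $p_2$ positive parts. The identity ${a_1+a_2 \choose 2} = {a_1 \choose 2} + {a_2 \choose 2} + a_1a_2$ shows that each step changes $\sum_j {n_j \choose 2}$ by $-a_1a_2 \le 0$, so the resulting partition satisfies $\sum_j {n_j \choose 2} \le f(p_1)$. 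Since $f(p_2)$ is by definition the minimum over all partitions of $n$ into $p_2$ positive parts, $f(p_2) \le f(p_1)$, which is the desired inequality.

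The only point that needs a small verification is that the splitting can always be carried out, i.e. that whenever the current partition has $t$ parts with $p_1 \le t < p_2$, some part has size at least $2$. This holds because $n = p_2q_2 + r_2 \ge p_2 + 1$ forces $t \le p_2 - 1 \le n-2 < n$, so $t$ positive parts cannot all equal $1$. I do not anticipate any real obstacle here: the whole argument rests on the elementary super-additivity of ${\,\cdot\, \choose 2}$ under merging of parts (equivalently, its sub-additivity under splitting), and the hypothesis that $n$, $p_i$, $q_i$, $r_i$ are all positive integers is precisely what guarantees that every balanced partition occurring in the argument consists of genuinely positive parts.
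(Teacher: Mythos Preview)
Your argument is correct and takes a genuinely different route from the paper. The paper proves the inequality by direct algebraic computation: it expands
\[
f(p_i)=(p_i-r_i){q_i\choose 2}+r_i{q_i+1\choose 2},
\]
uses $p_iq_i+r_i=n$ to rewrite $f(p_1)-f(p_2)$ as $\tfrac{1}{2}\bigl[n(q_1-q_2)-p_1q_1+r_1q_1+p_2q_2-r_2q_2\bigr]$, and then checks nonnegativity in the two cases $q_1=q_2$ and $q_1>q_2$. Your proof instead identifies $f(p_i)$ with the minimum of $\sum_j{n_j\choose 2}$ over partitions of $n$ into $p_i$ positive parts (via Observation~\ref{min chose}) and shows this minimum is nonincreasing in the number of parts by a splitting argument based on ${a_1+a_2\choose 2}={a_1\choose 2}+{a_2\choose 2}+a_1a_2$. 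The paper's approach is self-contained algebra that does not invoke Observation~\ref{min chose}; yours is more conceptual, explains \emph{why} the inequality holds, and would extend immediately to any convex summand in place of ${\,\cdot\,\choose 2}$. Your verification that a part of size at least $2$ is always available (using $n=p_2q_2+r_2\ge p_2+1>t$, which relies on the hypothesis that $q_2,r_2\ge 1$) is exactly the point that needs care, and you handle it correctly.
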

\begin{proof}{ Let
$f(p_i)=(p_i-r_i){q_i\choose2}+r_i{q_i+1\choose2}$, $1\leq
i\leq2$. We just need to prove that $f(p_1)\geq f(p_2)$.
\begin{eqnarray*}
f(p_1)-f(p_2)
&=&{1\over2}[(p_1-r_1)q_1(q_1-1)+r_1q_1(q_1+1)-(p_2-r_2)q_2(q_2-1)-r_2q_2(q_2+1)]\\
&=&{1\over2}q_1[p_1q_1-p_1+2r_1]-{1\over2}q_2[p_2q_2-p_2+2r_2]\\
&=&{1\over2}q_1[n-p_1+r_1]-{1\over2}q_2[n-p_2+r_2]\\
&=&{1\over2}[n(q_1-q_2)-p_1q_1+r_1q_1+p_2q_2-r_2q_2].
\end{eqnarray*}
Since $p_1<p_2$, we have $q_2\leq q_1$. If $q_1=q_2$, then
$r_2<r_1$. Therefore,
$$f(p_1)-f(p_2)={1\over2}q_1[(p_2-p_1)+(r_1-r_2)]\geq0.$$
If $q_2<q_1$, then $q_1-q_2\geq1$. Thus,
$$f(p_1)-f(p_2)\geq{1\over2}[n-p_1q_1+r_1q_1+q_2(p_2-r_2)]=
{1\over2}[r_1+r_1q_1+q_2(p_2-r_2)]\geq0.$$
 }\end{proof}
\begin{theorem}\label{k>(n-1)/d}
If $G$ is a randomly $k$-dimensional graph of order $n$ and
diameter $d$, then $\displaystyle k\geq {n-1\over d}$.
\end{theorem}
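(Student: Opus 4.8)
The plan is to feed the upper half of inequality~(\ref{bounnd}), namely $\sum_{v\in V(G)}\sum_{i=1}^{e(v)}{|\Gamma_i(v)|\choose2}\leq{n\choose2}(k-1)$, with a lower bound on the inner sum obtained separately for each vertex. Fix $v\in V(G)$. Since $G$ is connected and $v$ has eccentricity $e(v)\leq d$, the sets $\Gamma_0(v)=\{v\},\Gamma_1(v),\ldots,\Gamma_{e(v)}(v)$ are all nonempty and partition $V(G)$; thus $|\Gamma_1(v)|,\ldots,|\Gamma_{e(v)}(v)|$ are $e(v)\leq d$ positive integers whose sum is $n-1$. Write $n-1=dq+r$ with $0\leq r<d$. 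By Observation~\ref{min chose}, the minimum of $\sum{n_i\choose2}$ over all partitions of $n-1$ into $m$ positive parts is attained at the most balanced partition, and by Lemma~\ref{P1<p2}, applied with the integer ``$n$'' there replaced by $n-1$, this minimum does not increase as $m$ grows. As $e(v)\leq d$, it follows that
$$\sum_{i=1}^{e(v)}{|\Gamma_i(v)|\choose2}\ \geq\ (d-r){q\choose2}+r{q+1\choose2}\qquad\mbox{for every }v\in V(G)$$
(the boundary cases in which a part-count divides $n-1$ need only the immediate variant of these two facts).

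Summing over the $n$ vertices of $G$ and comparing with the bound above gives $n\left[(d-r){q\choose2}+r{q+1\choose2}\right]\leq{n\choose2}(k-1)$, that is,
$$(d-r){q\choose2}+r{q+1\choose2}\ \leq\ {n-1\over2}\,(k-1).$$
Two short computations, both using $n-1=dq+r$, then finish the proof. First, one checks the identity $(d-r){q\choose2}+r{q+1\choose2}={q\over2}\,(n-1-d+r)$. Second, expanding the products,
$$d\cdot{q\over2}\,(n-1-d+r)\ -\ {n-1\over2}\,(n-1-d)\ =\ {1\over2}\,r(d-r)\ \geq\ 0,$$
so ${q\over2}(n-1-d+r)\geq{n-1\over2}\cdot{n-1-d\over d}$. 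Chaining the last three displays, ${n-1\over2}\cdot{n-1-d\over d}\leq{n-1\over2}(k-1)$, hence ${n-1-d\over d}\leq k-1$, i.e. $n-1\leq dk$, i.e. $k\geq{n-1\over d}$, as desired.

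The only delicate point is the arithmetic with the remainder $r$: pinning down the exact minimum of $\sum{n_i\choose2}$, verifying the identity $(d-r){q\choose2}+r{q+1\choose2}={q\over2}(n-1-d+r)$, and checking that the surplus in the second display is exactly ${1\over2}r(d-r)$ (which is where the hypothesis $0\leq r<d$ is used); the case $r=0$ is harmless. If one prefers to sidestep Observation~\ref{min chose} and Lemma~\ref{P1<p2} altogether, the same inner bound follows at once from $\sum_{i=1}^{e(v)}{|\Gamma_i(v)|\choose2}={1\over2}\left(\sum_{i=1}^{e(v)}|\Gamma_i(v)|^2-(n-1)\right)$ together with the Cauchy--Schwarz inequality $\sum_{i=1}^{e(v)}|\Gamma_i(v)|^2\geq(n-1)^2/e(v)\geq(n-1)^2/d$, which yields $\sum_{i=1}^{e(v)}{|\Gamma_i(v)|\choose2}\geq{n-1\over2}\left({n-1\over d}-1\right)$ directly; but the integer estimate above is sharper and stays within the tools already developed in the paper.
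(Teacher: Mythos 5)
Your proof is correct and follows essentially the same route as the paper: you feed the per-vertex balanced-partition lower bound (Observation~\ref{min chose} together with Lemma~\ref{P1<p2}, comparing $e(v)\leq d$ parts) into the upper bound of Relation~(\ref{bounnd}) and then do arithmetic with $n-1=dq+r$. Your closing computation (the identity $(d-r){q\choose2}+r{q+1\choose2}={q\over2}(n-1-d+r)$ and the surplus ${1\over2}r(d-r)\geq0$) is in fact a slightly cleaner way to finish than the paper's floor/ceiling case analysis, but it is the same argument in substance.
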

\begin{proof}{ Note that, for each $v\in V(G)$,
$|\bigcup_{i=1}^{e(v)}\Gamma_i(v)|=n-1$. For $v\in V(G)$, let
$n-1=q(v)e(v)+r(v)$,  where $0\leq r(v)<e(v)$. Then, by
Observation~\ref{min chose},
\begin{equation}\label{2}(e(v)-r(v)){q(v)\choose2}+r(v){q(v)+1\choose2}\leq
\sum_{i=1}^{e(v)}{|\Gamma_i(v)|\choose2}.
\end{equation}
 %Also, $e(v)\leq e(u)$
%implies that $q(u)\geq q(v)$ and hence,
%$$(e(u)-r(u)){q(u)\choose2}+r(u){q(u)+1\choose2}\leq
%(e(v)-r(v)){q(v)\choose2}+r(v){q(v)+1\choose2}.$$
Let $w\in V(G)$ with $e(w)=d,~r(w)=r$, and $q(w)=q$, then
$n-1=qd+r$. Since for each $v\in V(G)$, $e(v)\leq e(w)$, by
Lemma~\ref{P1<p2},
$$(d-r){q\choose2}+r{q+1\choose2}\leq
(e(v)-r(v)){q(v)\choose2}+r(v){q(v)+1\choose2}.$$  Therefore,
$$n[(d-r){q\choose2}+r{q+1\choose2}]\leq\sum_{v\in V(G)}[
(e(v)-r(v)){q(v)\choose2}+r(v){q(v)+1\choose2}].$$ Thus, by
Relations~(\ref{2}) and (\ref{bounnd}),
$$n[(d-r){q\choose2}+r{q+1\choose2}]\leq\sum_{v\in V(G)}
\sum_{i=1}^{e(v)}{|\Gamma_i(v)|\choose2}\leq{n\choose2}(k-1).$$
Hence, $q[(d-r)(q-1)+r(q+1)]\leq (n-1)(k-1)$, which implies,
$q[(r-d)+(d-r)q+r(q+1)]\leq (n-1)(k-1)$. Therefore,
$q(r-d)+q(n-1)\leq (n-1)(k-1)$. Since $q=\lfloor{{n-1}\over
d}\rfloor$, we have
\begin{eqnarray*}
k-1\geq q+q{r-d\over n-1}&=&q+{qr\over n-1}-{qd\over n-1}\\
&=&q+{qr\over n-1}-{\lfloor{{n-1}\over d}\rfloor d\over n-1}\\
&\geq&q+{qr\over n-1}-1.
\end{eqnarray*} Thus, $k\geq \lfloor{{n-1}\over
d}\rfloor+{qr\over n-1}$. Note that, ${qr\over n-1}\geq0$. If
${qr\over n-1}>0$, then $k\geq \lceil{{n-1}\over d}\rceil$, since
$k$ is an integer. If ${qr\over n-1}=0$, then $r=0$ and
consequently, $d$ divides $n-1$. Thus, $\lfloor{{n-1}\over
d}\rfloor=\lceil{{n-1}\over d}\rceil$. Therefore, $k\geq
\lceil{{n-1}\over d}\rceil\geq {{n-1}\over d}\,\cdot$ }\end{proof}
The following theorem shows that there is no randomly
$k$-dimensional graph of order $n$, where $4\leq k\leq n-2$.
\begin{theorem}\label{bonds on k}
If $G$ is a randomly $k$-dimensional graph of order $n$, then
$k\leq3$ or $k\geq n-1$.
\end{theorem}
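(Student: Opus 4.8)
The plan is to argue directly from the defining property of a randomly $k$-dimensional graph --- that \emph{every} $k$-subset of $V(G)$ is a basis --- and to avoid the distance-counting machinery altogether. For distinct vertices $a,b$ put $Z(a,b)=\{z\in V(G)\,:\,d(z,a)=d(z,b)\}$; recall that a set $W$ fails to resolve the pair $\{a,b\}$ precisely when $W\subseteq Z(a,b)$, and that $a,b\notin Z(a,b)$.

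\textbf{Key step.} First I would prove that \emph{every} $(k-1)$-subset $S$ of $V(G)$ has the form $Z(a,b)$ for some pair $\{a,b\}$. Since $|S|=k-1<\beta(G)$, the set $S$ is not a resolving set, so two distinct vertices share a representation with respect to $S$; as these vertices necessarily lie outside $S$, there are $a,b\notin S$ with $r(a|S)=r(b|S)$, that is, $S\subseteq Z(a,b)$. For the reverse inclusion, take any $u\in V(G)\setminus S$: then $S\cup\{u\}$ has exactly $k$ vertices, hence is a basis and in particular a resolving set, so $r(a|S\cup\{u\})\neq r(b|S\cup\{u\})$; since $r(a|S)=r(b|S)$ already holds, the disagreement must occur in the coordinate of $u$, that is, $d(a,u)\neq d(b,u)$, so $u\notin Z(a,b)$. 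Hence $Z(a,b)\subseteq S$, and combining the two inclusions gives $Z(a,b)=S$.

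\textbf{Counting.} By the Key step the assignment $\{a,b\}\mapsto Z(a,b)$, restricted to the pairs with $|Z(a,b)|=k-1$, maps \emph{onto} the family of all $(k-1)$-subsets of $V(G)$; since there are only ${n\choose 2}$ pairs of vertices in total, this forces
$${n\choose k-1}\leq{n\choose 2}.$$
Suppose now $4\leq k\leq n-2$. Then $3\leq k-1\leq n-3$ and $n\geq k+2\geq 6$, so by the unimodality of the binomial coefficients (the minimum of ${n\choose j}$ for $3\leq j\leq n-3$ occurring at the ends) we have ${n\choose k-1}\geq{n\choose 3}$, while ${n\choose 3}/{n\choose 2}=(n-2)/3>1$ because $n\geq 6$ --- contradicting the displayed inequality. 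Hence $k\leq 3$ or $k\geq n-1$, the case $G=K_n$ (where $k=\beta(G)=n-1$) being included.

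\textbf{Main obstacle.} Essentially all the content lies in the inclusion $Z(a,b)\subseteq S$ of the Key step, and this is exactly where the randomly $k$-dimensional hypothesis is used in full: not merely that a basis has $k$ elements, but that \emph{each} one-vertex enlargement $S\cup\{u\}$ of a $(k-1)$-set is already a resolving set. The remaining ingredients are routine --- that a non-resolving set has a repeated representation on two vertices outside it, and that no case split is needed when $u\in\{a,b\}$ or when $k=n-1$ (so that $V(G)\setminus S$ is a single vertex). Note that the bound $k\geq(n-1)/d$ proved in the previous theorem is not needed for this approach.
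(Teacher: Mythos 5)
Your proposal is correct and takes essentially the same approach as the paper: both arguments come down to the counting inequality ${n\choose k-1}\leq{n\choose 2}$, obtained because every $(k-1)$-set is pinned down by a pair it fails to resolve, and then finish with a routine comparison of binomial coefficients. The only (minor) difference is in how that association is justified --- you prove the exact equality $S=Z(a,b)$ via single-vertex augmentations $S\cup\{u\}$, while the paper shows that distinct $(k-1)$-sets have disjoint families of unresolved pairs by considering $S\cup T$ --- which is the same use of $res(G)=k$ in a slightly different packaging.
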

\begin{proof}{ For each $W\subseteq V(G)$, let $\overline N(W)=V_p\setminus N(W)$ in $R(G)$.
We claim that, if $S,T\subseteq V(G)$ with $|S|=|T|=k-1$ and
$T\neq S$, then $\overline N(S)\cap \overline N(T)=\emptyset$.
Otherwise, there exists a pair $\{x,y\}\in \overline N(S)\cap
\overline N(T)$. Therefore, $\{x,y\}\notin N(S\cup T)$ and hence,
$S\cup T$ is not a resolving set for $G$. Since $S\neq T$, $|S\cup
T|>|S|=k-1$, which contradicts $res(G)=k$. Thus, $\overline
N(S)\cap \overline N(T)=\emptyset$.
\par Since $\beta(G)=k$,
for each $S\subseteq V(G)$ with $|S|=k-1$, $\overline
N(S)\neq\emptyset$. Now, let $\Omega=\{S\subseteq
V(G)\,|\,|S|=k-1\}$. Therefore,
$$|\bigcup_{S\in\Omega}\overline
N(S)|=\sum_{S\in\Omega}|\overline N(S)|\geq
\sum_{S\in\Omega}1={n\choose k-1}.$$ On the other hand,
$\bigcup_{S\in\Omega}\overline N(S)\subseteq V_p$. Hence,
$|\bigcup_{S\in\Omega}\overline N(S)|\leq {n\choose2}$.
Consequently, ${n\choose k-1}\leq{n\choose2}$. If $n\leq4$, then
$k\leq3$. Now, let $n\geq5$. Thus, $2\leq {n+1\over2}$. We know
that for each $a,b\leq{n+1\over2}$, ${n\choose a}\leq {n\choose
b}$ if and only if $a\leq b$. Therefore, if $k-1\leq{n+1\over2}$,
then $k-1\leq 2$, which implies $k\leq3$. If $k-1\geq{n+1\over2}$,
then $n-k+1\leq{n+1\over2}$. Since ${n\choose n-k+1}={n\choose
k-1}$, we have ${n\choose n-k+1}\leq {n\choose 2}$ and
consequently, $n-k+1\leq2$, which yields $k\geq n-1$. }\end{proof}
By Theorem~\ref{bonds on k}, to characterize all randomly
$k$-dimensional graphs, we only need to consider graphs of order $k+1$
and graphs with metric dimension less than four. By
Theorem~\ref{B=1,B=n-1}, if $G$ has $k+1$ vertices and
$\beta(G)=k$, then $G=K_{k+1}$. Also, if $k=1$, then $G=P_n$.
Clearly, the only paths with resolving number $1$ are $P_1=K_1$
and $P_2=K_2$. Furthermore, randomly $2$-dimensional graphs are determined
in~\cite{basis} and it has been proved that these graphs are odd cycles. Therefore, to complete the
characterization, we only need to investigate randomly
$3$-dimensional graphs.
%%%%%%%%%%%%%%%%%%%%%%%%%%%%%%%%%%%%%%%%%%%%%%%%%%%%%%%%%%%%%%%%%%%%%%%%%%%%%%%%%%%%%%%%%%%%%%%%%%%%%%%%%
%%%%%%%%%%%%%%%%%%%%%%%%%%%%%%%%%%%%%%%%%%%%%%%%%%%%%%%%%%%%%%%%%%%%%%%%%%%%%%%%%%%%%%%%%%%%%%%%%%%%%%%%%%%%%%
%%%%%%%%%%%%%%%%%%%%%%%%%%%%%%%%%%%%%%%%%%%%%%%%%%%%%%%%%%%%%%%%%%%%%%%%%%%%%%%%%%%%%%%%%%%%%%%%%%%%%%%%
\section{Randomly $\bf3$-Dimensional Graphs}\label{3dimensional}
In this section, through several lemmas and theorems, we prove
that the complete graph $K_4$ is the unique randomly
$3$-dimensional graph.
\begin{pro}\label{DELTA} If $res(G)=k$, then $\Delta(G)\leq
2^{k-1}+k-1$.
\end{pro}
\begin{proof}{ Let $v\in V(G)$ be a vertex with
$\deg(v)=\Delta(G)$ and $T=\{v,v_1,v_2,\ldots,v_{k-1}\}$, where
$v_1,v_2,\ldots,v_{k-1}$ are neighbors of $v$. Since $res(G)=k$,
$T$ is a resolving set for $G$. Note that, $d(u,v)=1$ and
$d(u,v_i)\in\{1,2\}$ for each $u\in N(v)\setminus T$ and each
$i,~1\leq i\leq k-1$. Therefore, the maximum number of distinct
representations for vertices of $N(v)\setminus T$ is  $2^{k-1}$.
Since $T$ is a resolving set for $G$, the representations of
vertices of $N(v)\setminus T$ are distinct. Thus, $|N(v)\setminus
T|\leq 2^{k-1}$ and hence, $\Delta(G)=|N(v)|\leq 2^{k-1}+k-1$.
}\end{proof}
\begin{lemma}\label{Delta<6} If $res(G)=3$, then $\Delta(G)\leq5$.
\end{lemma}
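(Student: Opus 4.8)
The plan is to push past the bound $\Delta(G)\le 2^{k-1}+k-1=6$ that Proposition~\ref{DELTA} gives for $k=3$, and to rule out the extremal value $\Delta(G)=6$. So I would suppose, for contradiction, that $v$ is a vertex with $\deg(v)=6$, and write $N(v)=\{x_1,\dots,x_6\}$ and $H=G[N(v)]$.

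The engine of the argument is that for \emph{every} pair of neighbours $x_i,x_j$ of $v$ the triple $T=\{v,x_i,x_j\}$ is a resolving set of $G$, since $res(G)=3$. Running the counting in the proof of Proposition~\ref{DELTA} for this $T$: each of the four vertices $u\in N(v)\setminus\{x_i,x_j\}$ has $d(u,v)=1$ and $d(u,x_i),d(u,x_j)\in\{1,2\}$, so $r(u|T)$ lies in the four-element set $\{1\}\times\{1,2\}\times\{1,2\}$. No vertex outside $N(v)$ can have first coordinate $1$, and $r(v|T)$, $r(x_i|T)$, $r(x_j|T)$ each contain a $0$, so these four vertices — being distinct vertices of $G$ resolved by $T$ — must realize the patterns $(1,1,1),(1,1,2),(1,2,1),(1,2,2)$ \emph{bijectively}. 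Translating the second/third coordinates back into adjacency in $H$: among the four vertices of $N(v)\setminus\{x_i,x_j\}$, exactly one is adjacent to both $x_i$ and $x_j$, exactly one to $x_i$ only, exactly one to $x_j$ only, and exactly one to neither.

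Now I would fix $a=x_i$ and vary $b=x_j$ over the other five neighbours of $v$. From the bijectivity just established, exactly two of the four vertices of $N(v)\setminus\{a,b\}$ are adjacent to $a$, so $\deg_H(a)=2+[a\sim b]$, where $[a\sim b]$ is $1$ if $a\sim b$ in $G$ and $0$ otherwise. The left-hand side does not depend on $b$, so $[a\sim b]$ is constant over all $b\in N(v)\setminus\{a\}$. If it is always $1$, then $a$ is adjacent to all five other neighbours of $v$, forcing $\deg_H(a)=5=2+1=3$, a contradiction; if it is always $0$, then $\deg_H(a)=0=2+0=2$, again a contradiction. Either way we reach a contradiction, so no vertex of degree $6$ can exist, and combined with Proposition~\ref{DELTA} this gives $\Delta(G)\le 5$.

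I do not expect a real obstacle: the only points needing care are confirming that the four ``type'' vertices are genuinely distinct vertices of $G$ and that their representations cannot coincide with those of $v,x_i,x_j$ or of vertices farther from $v$ — both immediate from the distances involved. The substance of the proof is simply that applying the $res(G)=3$ condition to \emph{all} triples of the form $\{v,x_i,x_j\}$ over-determines the local structure $H=G[N(v)]$ on a $6$-set.
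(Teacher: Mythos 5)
Your proof is correct. You share the paper's opening move: for a degree-$6$ vertex $v$ and a pair $x_i,x_j\in N(v)$, the triple $\{v,x_i,x_j\}$ resolves $G$, so the four remaining neighbors of $v$ must realize the four representations $(1,1,1),(1,1,2),(1,2,1),(1,2,2)$ bijectively (and your checks that these four vertices are distinct, lie outside the triple, and cannot be confused with $v$, $x_i$, $x_j$ or with vertices at distance at least $2$ from $v$ are all sound). From that point on, however, the two arguments diverge. The paper fixes one such triple $\{v,x,y\}$, uses the pattern to pin down the adjacencies of $y$ inside $N(v)$, then invokes a second resolving triple $\{v,y,v_3\}$ to force $y\sim x$ and $y\sim v_1$, and finally exhibits the explicit non-resolving set $\{x,v_1,v_3\}$ with $r(y)=r(v)=(1,1,1)$. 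You instead let the pair vary: the bijectivity gives, for every $b\neq a$ in $N(v)$, exactly two neighbors of $a$ among the other four vertices, hence $\deg_H(a)=2+[a\sim b]$ in $H=\langle N(v)\rangle$; since the left side is independent of $b$, the vertex $a$ would have to be adjacent to all five or to none of the other neighbors of $v$, forcing $\deg_H(a)$ to equal both $3$ and $5$, or both $2$ and $0$ --- a contradiction either way. Your counting argument is cleaner and symmetric (no labeling choices, no case chasing, no need to display a concrete failing triple), while the paper's argument has the minor virtue of producing an explicit witness set that fails to resolve; both rest only on $res(G)=3$ and the trivial fact that neighbors of $v$ are at mutual distance $1$ or $2$.
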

\begin{proof}{By Proposition~\ref{DELTA},
$\Delta(G)\leq6$. Suppose on the contrary that, there exists a
vertex $v\in V(G)$ with $\deg(v)=6$ and
$N(v)=\{x,y,v_1,\ldots,v_4\}$. Since $res(G)=3$, set $\{v,x,y\}$
is a resolving set for $G$. Therefore, the representations of
vertices $v_1,\ldots,v_4$ with respect to this set are
$r_1=(1,1,1)$, $r_2=(1,1,2)$, $r_3=(1,2,1)$, and $r_4=(1,2,2)$.
Without loss of generality, we can assume $r(v_i|\{v,x,y\})=r_i$,
for each $i,~1\leq i\leq4$. Thus, $y\nsim v_2,~y\nsim v_4$, and
$y\sim v_3$.
\par On the other hand, set $\{v,y,v_3\}$ is a
resolving set for $G$, too. Hence, the representations of vertices
$x,v_1,v_2,v_4$ with respect to this set are $r_1,r_2,r_3,r_4$ in
some order. Therefore, the vertex $y$ has two neighbors and two
non-neighbors in $\{x,v_1,v_2,v_4\}$. Since $y\nsim v_2$ and
$y\nsim v_4$, the vertices $x,v_1$ are adjacent to $y$. Thus,
$r(y|\{x,v_1,v_3\})=(1,1,1)=r(v|\{x,v_1,v_3\})$, which contradicts
$res(G)=3$. Hence, $\Delta(G)\leq 5$. }\end{proof}
\begin{lemma}\label{induced=C5}
If $res(G)=3$ and $v\in V(G)$ is a vertex with $\deg(v)=5$, then
the induced subgraph $\langle N(v)\rangle$ is a cycle $C_5$.
\end{lemma}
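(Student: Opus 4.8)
The plan is to show that for a vertex $v$ of degree $5$ with $res(G)=3$, the induced subgraph $H=\langle N(v)\rangle$ on the five neighbors must be $2$-regular, hence a disjoint union of cycles, and then rule out everything except a single $5$-cycle. First I would record what resolving triples give us: for any two neighbors $x,y$ of $v$, the set $\{v,x,y\}$ resolves $G$, so the remaining three neighbors get the three distinct vectors $(1,1,1),(1,1,2),(1,2,1),(1,2,2)$ minus one of them (there are only three other neighbors but four available vectors, so exactly one vector is missed). The key translation is: for a neighbor $u\notin\{x,y\}$ of $v$, the second coordinate of $r(u|\{v,x,y\})$ records whether $u\sim x$ inside $H$, and the third whether $u\sim y$. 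So the adjacency pattern of $\{x,y\}$ into the other three neighbors is essentially a $2\times 3$ $0/1$-matrix with distinct columns. This immediately forces, for every pair $x,y\in N(v)$, that among the other three neighbors the "adjacent to $x$" counts and "adjacent to $y$" counts are constrained: the multiset of columns $\{(*, a_u, b_u)\}$ has three distinct entries drawn from $\{(1,1),(1,2),(2,1),(2,2)\}$.

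Next I would convert this into a degree/regularity statement for $H$. Counting, for a fixed $x\in N(v)$: as $y$ ranges over the other four neighbors, and using that in each triple $\{v,x,y\}$ the vertex $x$ has a prescribed number of neighbors among the remaining three, one can pin down $\deg_H(x)$. Concretely, I expect to show $\deg_H(x)=2$ for every $x$: if $\deg_H(x)$ were $0$, $1$, $3$, or $4$, picking a suitable second vertex $y$ (adjacent or non-adjacent to $x$ as needed) makes the required four distinct vectors unattainable among only three vertices, or forces two neighbors of $v$ to share a representation with respect to some resolving triple — exactly the kind of contradiction used in Lemma~\ref{Delta<6}. The cleanest route is probably: use Observation~\ref{twins}-style reasoning or Lemma~\ref{at most k-1 common neighbor} (with $k=3$, any two vertices have at most two common neighbors) to bound how many common neighbors $x$ and another neighbor of $v$ can have, together with the explicit vector bookkeeping above, to squeeze $\deg_H(x)$ into the single value $2$.

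Once $H$ is $2$-regular on $5$ vertices, it is either $C_5$ or $C_3\cup C_2$ — but $C_2$ is not simple, so $H=C_5$ (alternatively $C_5$ vs. "$C_3$ plus an edge" if one only gets that every vertex has degree exactly $2$ in a simple graph on $5$ vertices; a disjoint $C_3\sqcup C_2$ is impossible simply, and $C_3$ together with two more vertices of degree $2$ forces a $C_5$ on the remaining structure unless it is $C_3\sqcup C_2$). So I would finish by noting the only simple $2$-regular graph on $5$ vertices is $C_5$. The main obstacle I anticipate is the degree-$2$ step: one must be careful that the "missed vector" among the four candidates can depend on the chosen pair $\{x,y\}$, so the bookkeeping has to be done for several well-chosen pairs simultaneously (much as the proof of Lemma~\ref{Delta<6} switches from the triple $\{v,x,y\}$ to the triple $\{v,y,v_3\}$ to extract a contradiction). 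I would organize the argument around first proving $\delta(H)\ge 2$ and $\Delta(H)\le 2$ separately, each by exhibiting an offending resolving triple when the bound fails, and then concluding $H=C_5$.
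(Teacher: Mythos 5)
Your route is sound and, once the one unexecuted step is filled in, it proves the lemma; it shares the paper's two key ingredients but organizes them differently. Like the paper, you get $\Delta(\langle N(v)\rangle)\leq 2$ from the common-neighbor bound (any two vertices have at most $k-1=2$ common neighbors when $res(G)=3$), and like the paper you derive contradictions by exhibiting a $3$-set $\{v,x,y\}$ with respect to which two neighbors of $v$ share a representation. The paper then eliminates the remaining possibilities by a case analysis on the component structure of $H=\langle N(v)\rangle$ (largest component of size at most $3$; a $4$-vertex component plus an isolated vertex; $P_5$), killing each case with an explicit triple. You instead propose proving $\delta(H)\geq 2$, so that $H$ is $2$-regular and hence $C_5$, the only simple $2$-regular graph on five vertices. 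That step, which you leave at the level of a plan, does go through and in fact more simply than the four-vector bookkeeping you anticipate: if $\deg_H(x)\leq 1$, let $w$ be the unique $H$-neighbor of $x$ (or any other neighbor of $v$ if $x$ is isolated in $H$) and consider the resolving set $\{v,x,w\}$; every vertex $u\in N(v)\setminus\{x,w\}$ satisfies $d(u,v)=1$, $d(u,x)=2$ and $d(u,w)\in\{1,2\}$, so the three such vertices take at most two distinct representations, contradicting $res(G)=3$. In particular your worry about which of the four vectors is "missed" for a given pair never arises — a pigeonhole on the forced second coordinate suffices. What your organization buys is uniformity: a single template (low degree in $H$ gives a pigeonhole contradiction) replaces the paper's separate treatment of small components, the isolated-vertex-plus-path case, and $P_5$, all of which contain a vertex of $H$-degree at most $1$ and are therefore subsumed. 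The only other blemish is the muddled aside about $C_3$ "plus an edge" at the end; the clean statement you need is simply that a $2$-regular simple graph on five vertices has all cycle components of length at least $3$, and $5=3+2$ is impossible, so $H=C_5$.
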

\begin{proof}{Let $H=\langle N(v)\rangle$.
By Theorem~\ref{at most k-1 common neighbor}, for each $x\in N(v)$
we have, $|N(x)\cap N(v)|\leq2$. Therefore, $\Delta(H)\leq2$, thus,
each component of $H$ is a path or a cycle. If
the largest component of $H$ has at most three vertices, then
there are two vertices $x,y\in N(v)$ which are not adjacent to any
vertex in $N(v)\setminus\{x,y\}$. Thus, for each $u\in
N(v)\setminus\{x,y\}$, $r(u|\{v,x,y\})=(1,2,2)$, which contradicts
$res(G)=3$. Therefore, the largest component of $H$, say $H_1$,
has at least four vertices and the other component has at most one
vertex, say $\{x\}$. Let $(y_1,y_2,y_3)$ be a path in $H_1$. Hence $r(y_1|\{v,x,y_2\})=(1,2,1)=r(y_3|\{v,x,y_2\})$, which is
a contradiction. Therefore, $H=C_5$ or $H=P_5$. If
$H=P_5=(y_1,y_2,y_3,y_4,y_5)$, then
$r(y_4|\{v,y_1,y_2\})=(1,2,2)=r(y_5|\{v,y_1,y_2\})$, which is
impossible. Therefore, $H=C_5$. }\end{proof}
\begin{lemma}\label{induced=P4}
If $res(G)=3$ and $v\in V(G)$ is a vertex with $\deg(v)=4$, then
the induced subgraph $\langle N(v)\rangle$ is a path $P_4$.
\end{lemma}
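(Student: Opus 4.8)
The plan is to mimic the structure of Lemma~\ref{induced=C5}, exploiting the fact that $res(G)=3$ together with Theorem~\ref{at most k-1 common neighbor}, which forces any two vertices of $G$ to have at most two common neighbors. Let $H=\langle N(v)\rangle$ with $|V(H)|=4$. Since every $x\in N(v)$ satisfies $|N(x)\cap N(v)|\leq 2$, we get $\Delta(H)\leq 2$, so each component of $H$ is a path or a cycle. The candidates on four vertices are therefore $C_4$, $C_3\cup K_1$, $P_4$, $P_3\cup K_1$, $P_2\cup P_2$, $P_2\cup K_1\cup K_1$, and the empty graph $\overline{K_4}$; the goal is to rule out everything except $P_4$.

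The main tool is the same ``twin representation'' trick used repeatedly above: for any two neighbors $a,b\in N(v)$, the set $\{v,a,b\}$ is a resolving set (since $res(G)=3$), so the remaining two vertices of $N(v)$, call them $c,d$, must receive distinct representations in $\{(1,1,1),(1,1,2),(1,2,1),(1,2,2)\}$ — in particular $c$ and $d$ cannot have the same adjacency pattern to $\{a,b\}$ within $N(v)$. First I would use this to eliminate all disconnected cases and the low-edge cases at once: if $H$ has an isolated vertex or more generally two vertices $x,y$ that together dominate nothing in $N(v)\setminus\{x,y\}$, then the other two vertices $c,d$ both have representation $(1,2,2)$ with respect to $\{v,x,y\}$, a contradiction. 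This kills $\overline{K_4}$, $P_2\cup K_1\cup K_1$, $P_3\cup K_1$, $C_3\cup K_1$, and $P_2\cup P_2$ (in the last case pick one endpoint from each $P_2$ as $x,y$; then the other two endpoints $c,d$ are each adjacent only to $x$ or only to $y$ respectively — actually here one must check more carefully, but taking $x,y$ to be the two vertices of one $P_2$ gives the other $P_2$'s two vertices both with representation $(1,2,2)$). That leaves only $C_4$ and $P_4$.

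To eliminate $C_4=(y_1,y_2,y_3,y_4,y_1)$: here $\{v,y_1,y_2\}$ is a resolving set, and the two remaining vertices are $y_3$ (adjacent to $y_2$, not $y_1$) and $y_4$ (adjacent to $y_1$, not $y_2$), so their representations are $(1,2,1)$ and $(1,1,2)$ — distinct, so no immediate contradiction there. Instead I would look outside $N(v)$ or use a different triple. Note $y_1$ and $y_3$ are non-adjacent and both lie in $N(v)\cap N(y_2)\cap N(y_4)$, so $y_1,y_3$ are already two common neighbors of $y_2$ and $y_4$; likewise $y_2,y_4$ are two common neighbors of $y_1,y_3$. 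Now consider the set $\{v,y_1,y_3\}$: it is a resolving set, and both $y_2$ and $y_4$ have representation $(1,1,1)$ with respect to it — that is the contradiction, since $y_2\neq y_4$. So $C_4$ is impossible and $H=P_4$.

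The step I expect to be the genuine obstacle is the disconnected-case bookkeeping, specifically making the $P_2\cup P_2$ elimination airtight: one has to choose the auxiliary pair $\{x,y\}$ correctly (both endpoints of a single $P_2$ component) so that the other two vertices really do coincide in representation, rather than carelessly picking one endpoint from each component, which does \emph{not} immediately work. Everything else is a short application of Theorem~\ref{at most k-1 common neighbor} and the representation-pigeonhole principle, exactly in the style of the preceding lemmas. An alternative uniform approach worth checking is: since $\Delta(H)\le 2$ and $|V(H)|=4$, count edges — if $e(H)\le 2$ then some pair $\{x,y\}$ leaves the other two vertices undominated within $N(v)\setminus\{x,y\}$, forcing the $(1,2,2)$ collision, while $e(H)=4$ forces $H=C_4$ which we rule out above, and $e(H)=3$ forces $H=P_4$ (the only $4$-vertex graph with three edges and maximum degree $2$), so this edge-count dichotomy may streamline the whole argument into two cases.
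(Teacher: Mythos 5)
Your overall strategy is the paper's strategy: bound $\Delta(\langle N(v)\rangle)\le 2$ via Theorem~\ref{at most k-1 common neighbor}, enumerate the possible graphs $H$ on four vertices, and kill each unwanted case by exhibiting a triple $\{v,a,b\}$ with respect to which two vertices of $N(v)$ collide. Your treatments of $\overline{K_4}$, $P_2\cup 2K_1$, $P_2\cup P_2$ (taking both ends of one $P_2$) and $C_4$ (the $(1,1,1)$ collision on the two opposite vertices) are correct and match the paper. However, there is a genuine gap in the blanket claim that the $(1,2,2)$ collision ``kills $P_3\cup K_1$ and $C_3\cup K_1$'': in those two configurations \emph{no} pair $x,y\in N(v)$ exists that is jointly non-adjacent to the remaining two vertices (in $P_3\cup K_1$ every choice of a pair leaves at least one remaining vertex adjacent to the pair; in $C_3\cup K_1$ the leftover triangle vertex is adjacent to any chosen triangle vertices), so the $(1,2,2)$ pattern cannot be arranged and your stated step fails. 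These cases need a different collision, which is what the paper uses: take $x$ the isolated vertex of $H$ and $y_2$ a middle vertex of the path $(y_1,y_2,y_3)$ (or any vertex of the triangle); then $r(y_1|\{v,x,y_2\})=(1,2,1)=r(y_3|\{v,x,y_2\})$, contradicting $res(G)=3$. Your general principle (the two leftover vertices cannot have the same adjacency pattern to the chosen pair) is sound and suffices once this choice is made, so the gap is fixable, but as written the elimination of these two cases does not go through.

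The proposed ``streamlined'' edge-count dichotomy has the same defect and an additional error: $C_3\cup K_1$ has three edges and maximum degree $2$, so $e(H)=3$ does \emph{not} force $H=P_4$; and $P_3\cup K_1$ has $e(H)=2$ but admits no pair leaving the other two vertices undominated, so the claimed $(1,2,2)$ collision is again unavailable there. Both exceptional cases must be handled by the $(1,2,1)$-type collision above, as in the paper's proof.
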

\begin{proof}{Let $H=\langle N(v)\rangle$.
By Theorem~\ref{at most k-1 common neighbor}, for each $x\in N(v)$,
we have $|N(x)\cap N(v)|\leq2$. Hence, $\Delta(H)\leq2$ thus,
each component of $H$ is a path or a cycle.  If $H$
has more than two components, then it has at least two components
with one vertex say $\{x\}$ and $\{y\}$. Thus,
$r(u|\{v,x,y\})=(1,2,2)$, for each $u\in N(v)\setminus\{x,y\}$,
which contradicts $res(G)=3$. If $H$ has exactly two  components
$H_1=\{x,y\}$ and $H_2=\{u,w\}$, then
$r(u|\{v,x,y\})=(1,2,2)=r(w|\{v,x,y\})$, which is a contradiction.
Now, let $H$ has a component with one vertex, say $\{x\}$, and a
component contains a path $(y_1,y_2,y_3)$. Consequently,
$r(u|\{v,x,y_2\})=(1,2,1)$, for each $u\in
N(v)\setminus\{x,y\}$, which is a  contradiction. Therefore, $H=C_4$ or
$H=P_4$. If $H=C_4=(y_1,y_2,y_3,y_4,y_1)$, then
$r(y_1|\{v,y_2,y_4\})=(1,1,1)=r(y_3|\{v,y_2,y_4\})$, which is
impossible. Therefore, $H=P_4$. }\end{proof}
\begin{pro}\label{k=3 implies Delta<3}
If $G$ is a randomly $3$-dimensional graph, then $\Delta(G)\leq3$.
\end{pro}
\begin{proof}{ By Lemma~\ref{Delta<6},
$\Delta(G)\leq5$.  If there exists a vertex $v\in V(G)$ with
$\deg(v)=5$, then, by Lemma~\ref{induced=C5}, $\langle
N(v)\rangle=C_5$. If $\Gamma_2(v)=\emptyset$, then $G=C_5\vee K_1$
(the join of graphs $C_5$ and $K_1$) and hence, $\beta(G)=2$,
which is a contradiction. Thus, $\Gamma_2(v)\neq\emptyset$. Let
$u\in \Gamma_2(v)$. Then $u$ has a neighbor in $N(v)$, say $x$.
Since $\langle N(v)\rangle=C_5$, $x$ has exactly two neighbors in
$N(v)$, say $x_1,x_2$. Therefore, $\deg(x)\geq4$. By
Lemmas~\ref{induced=C5} and \ref{induced=P4},
$\langle\{u,v,x_1,x_2\}\rangle=P_4$. Note that, by Theorem~\ref{at
most k-1 common neighbor}, $u$ has at most two neighbors in
$N(v)$. Thus, $u$ is adjacent to exactly one of $x_1$ and $x_2$,
say $x_1$.  As in Figure~\ref{12}(a), the set $\{u,v,s\}$ is not a
resolving set for $G$, because
$r(x|\{u,v,s\})=(1,1,2)=r(x_1|\{u,v,s\})$. This contradiction
implies that $\Delta(G)\leq4$.
\par If $v$ is a vertex of degree four in $G$, then by
Lemma~\ref{induced=P4}, $\langle N(v)\rangle=P_4$. Let $\langle
N(v)\rangle=(x_1,x_2,x_3,x_4)$. If $\Gamma_2(v)=\emptyset$, then
$G=P_4\vee K_1$ and consequently, $\beta(G)=2$, which is a
contradiction. Thus, $\Gamma_2(v)\neq\emptyset$. Let
$u\in\Gamma_2(v)$. Then, $u$ has a neighbor in $N(v)$ and by
Theorem~\ref{at most k-1 common neighbor}, $u$ has at most two
neighbors in $N(v)$. If $u$ has only one neighbor in $N(v)$, then
by symmetry, we can assume $u\sim x_1$ or $u\sim x_2$. If $u\sim
x_2$ and $u\nsim
x_1$, then $\deg(x_2)=4$ and by Lemma~\ref{induced=P4}, $\langle
\{u,x_1,x_3,v\}\rangle=P_4$. Therefore, $u$ has two neighbors in
$N(v)$, which is a contradiction. If $u\sim x_1$ and $u\nsim
x_2$, then
$r(v|\{x_1,x_3,u\})=(1,1,2)=r(x_2|\{x_1,x_3,u\})$, which
contradicts $res(G)=3$. Hence, $u$ has exactly two neighbors in
$N(v)$. Let $T=N(u)\cap N(v)$. By symmetry, we can assume that $T$
is one of the sets $\{x_1,x_2\},\{x_1,x_3\},\{x_1,x_4\}$, and
$\{x_2,x_3\}$. If $T=\{x_1,x_2\}$, then
$r(x_1|\{v,x_4,u\})=(1,2,1)=r(x_2|\{v,x_4,u\})$. If
$T=\{x_1,x_3\}$, then
$r(x_1|\{v,x_2,u\})=(1,1,1)=r(x_3|\{v,x_2,u\})$. If
$T=\{x_1,x_4\}$, then
$r(v|\{x_1,x_3,u\})=(1,1,2)=r(x_2|\{x_1,x_3,u\})$. These
contradictions, imply that $T=\{x_2,x_3\}$. Thus,
$|\Gamma_2(v)|=1$, because each vertex of $\Gamma_2(v)$ is
adjacent to both vertices $x_2$ and $x_3$ and if $\Gamma_2(v)$ has
more than one vertex, then $\deg(x_2)=\deg(x_3)\geq5$, which is
impossible. Now, if $\Gamma_3(v)=\emptyset$, then $\{x_1,x_4\}$ is
a resolving set for $G$, which is a contradiction. Therefore,
$\Gamma_3(v)\neq\emptyset$ and hence, $u$ is a cut vertex in $G$,
which contradicts the $2$-connectivity of $G$
(Theorem~\ref{2connected}). Consequently, $\Delta(G)\leq3$.
}\end{proof}
%\begin{figure}[h]
%\centering
%\unitlength=.6mm
%%\begin{picture}(90,48)
%\epsfysize=5cm
%\epsfbox[0 30 300 130]{graph fig1.eps}
%%\begin{figure}[h]
%%\unitlength=1.1mm
%%\epsfxsize=45mm
%%\epsfysize=10cm
%%\hspace{6cm}{\epsffile{n-3.eps}}
%%%\caption{\mbox{\InF{}  ð‰Âé \EnF{}$G^*$\InF{}  ¢¤  ì‰Ì‰ƒ‰'ý $4$ \EnF{}}}
%%\end{figure}

\vspace{-.7cm}\begin{figure}[ht]
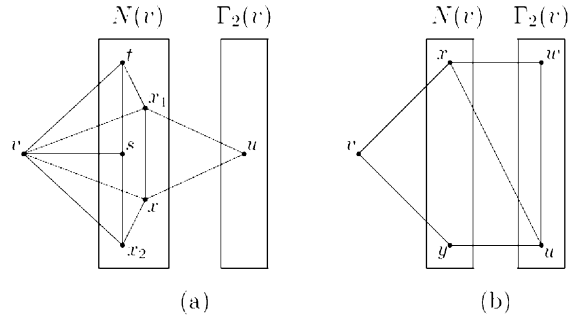
\hspace{3cm}
%\unitlength=1.1mm
\vspace*{5cm}\special{em:graph fig1.bmp} \vspace*{1.5cm}
\vspace{-1.3cm}\caption{ $(a)$ $\Delta(G)=5$, $(b)$ Neighbors of a
vertex of degree $2$.\label{12}}
\end{figure}

%\caption{\label{12} $(a)$ $\Delta(G)=5$, $(b)$ Neighbors of a
%vertex of degree $2$.}
%\end{figure}
\begin{theorem}\label{3-regular}
If $G$ is a randomly $3$-dimensional graph, then $G$ is
$3$-regular.
\end{theorem}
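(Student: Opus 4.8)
The plan is to build on the two facts already available: $\Delta(G)\le 3$ by Proposition~\ref{k=3 implies Delta<3}, and $\delta(G)\ge 2$ because, by Lemma~\ref{2connected}, $G$ is $2$-connected. If $G=K_4$ there is nothing to prove, so I may assume $G\neq K_n$ and hence use Proposition~\ref{not twin}. It then remains to rule out a vertex of degree $2$, so suppose for contradiction that $v\in V(G)$ has $N(v)=\{x,y\}$.

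The first step is a reduction showing that $x$ and $y$ must have a second common neighbour. Since $v$ has no neighbours other than $x$ and $y$, every $u\neq v$ satisfies $d(u,v)=1+\min\{d(u,x),d(u,y)\}$; consequently, two vertices of $V(G)\setminus\{v\}$ that agree in the coordinates $d(\cdot,x),d(\cdot,y)$ also agree in the coordinate $d(\cdot,v)$, so the $3$-set $\{v,x,y\}$ (which resolves $G$ because $res(G)=3$) already forces the $2$-set $\{x,y\}$ to resolve $V(G)\setminus\{v\}$. If $v$ were the only common neighbour of $x$ and $y$, then $v$ would be the unique vertex with representation $(1,1)$ relative to $\{x,y\}$, so $\{x,y\}$ would resolve all of $G$, contradicting $\beta(G)=3$. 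Hence $x$ and $y$ have a common neighbour $w\neq v$; by Lemma~\ref{at most k-1 common neighbor}, $v$ and $w$ are their only common neighbours, and since $N(w)=\{x,y\}$ would make $v$ and $w$ a pair forbidden by Proposition~\ref{not twin}, we get $\deg(w)=3$, say $N(w)=\{x,y,w'\}$.

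The second step extracts the local picture (this is the configuration drawn as Figure~\ref{12}(b)). For every $z\notin\{v,w,x,y\}$ the $3$-set $\{v,w,z\}$ must resolve the pair $\{x,y\}$; since $r(x|\{v,w,z\})=(1,1,d(x,z))$ and $r(y|\{v,w,z\})=(1,1,d(y,z))$, this means no vertex outside $\{v,w,x,y\}$ is equidistant from $x$ and $y$. Feeding $z=w'$ into this, and using $1\le d(w',x),d(w',y)\le 2$, forces $\{d(w',x),d(w',y)\}=\{1,2\}$; say $w'\sim x$ and $w'\nsim y$. Then $\{v,w,w'\}\subseteq N(x)$ together with $\Delta(G)\le 3$ yields $N(x)=\{v,w,w'\}$ (so in particular $x\nsim y$), and $\{x,w,w'\}$ is a triangle. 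Thus $G$ contains a $4$-cycle $v\,x\,w\,y$ with the triangle $\{x,w,w'\}$ attached along the edge $xw$, and $v$ is the unique degree-$2$ vertex among $\{v,x,y,w,w'\}$.

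To finish, I would keep exploiting the fact that no vertex outside $\{v,w,x,y\}$ is equidistant from $x$ and $y$, now applied to the (at most one) extra neighbour of $w'$ and of $y$ and to their neighbours, together with $\Delta(G)\le 3$, the common-neighbour bound, and $2$-connectivity. Each way of extending the forced local picture either produces a vertex equidistant from $x$ and $y$ (contradicting $res(G)=3$ via the corresponding triple $\{v,w,z\}$) or collapses $G$ to one of a few small graphs, each of which admits a resolving $2$-set and hence contradicts $\beta(G)=3$; for instance, if $V(G)=\{v,x,y,w,w'\}$ then $G$ has edge set $\{vx,vy,wx,wy,ww',w'x\}$ and is resolved by $\{v,w'\}$. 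The bookkeeping in this last step — checking that no extension escapes both horns — is the main obstacle; everything before it is short.
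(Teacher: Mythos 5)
Your opening reduction is correct and in fact a little cleaner than the paper's: you get the second common neighbour $w$ of $x,y$ directly from the identity $d(u,v)=1+\min\{d(u,x),d(u,y)\}$, where the paper instead invokes its separating-set lemma (Lemma~\ref{|T|=k-1}) applied to $T=\{x,y\}$; both routes arrive at the same five-vertex configuration of Figure~\ref{12}(b). But from that point on you have only stated a plan, not a proof, and the part you defer (``the bookkeeping in this last step \dots is the main obstacle'') is precisely the substance of the theorem. The paper spends the bulk of its argument there: it shows $\Gamma_2(v)$ must contain a third vertex $z$ (else your $w'$ is a cut vertex), that $z\sim y$ and $z\nsim w'$ (the latter via a seven-vertex subgraph of metric dimension $2$ plus a cut-vertex contradiction), that every neighbour of $z$ in $\Gamma_3(v)$ is adjacent to $w'$ -- obtained from the unresolved pair $\{v,w\}$ with respect to $\{y,z,s\}$, a contradiction that is \emph{not} of your ``equidistant from $x$ and $y$'' type -- hence $\Gamma_3(v)=\{t\}$, and finally that $G$ must be the specific graph of Figure~\ref{34}(b), which has metric dimension $2$.

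So the gap is concrete: you assert that ``each way of extending the forced local picture'' falls into one of two horns (a vertex equidistant from $x$ and $y$, or a small graph with a resolving $2$-set), but you neither enumerate the extensions nor verify the dichotomy, and the paper's own analysis shows that additional tools are genuinely needed (contradictions coming from other unresolved pairs such as $\{v,w\}$, and cut-vertex arguments used to terminate the growth of $\Gamma_2(v)$ and $\Gamma_3(v)$), so it is not evident that your two horns suffice as stated. A smaller issue: your claim that ``$v$ is the unique degree-$2$ vertex among $\{v,x,y,w,w'\}$'' is not justified at that stage, since the degrees of $y$ and $w'$ in $G$ are not yet determined. As written, the proposal establishes the correct starting configuration but leaves the decisive case analysis unproved.
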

\begin{proof}{By Proposition~\ref{k=3 implies Delta<3},
$\Delta(G)\leq3$ and by Theorem~\ref{2connected},
$\delta(G)\geq2$. Suppose that, $v$ is a vertex of degree $2$ in
$G$. Let $N(v)=\{x,y\}$. Since $N(v)$ is a separating set of size
$2$ in $G$, Theorem~\ref{|T|=k-1} implies that
$G\setminus\{v,x,y\}$ is a connected graph and there exists a
vertex $u\in V(G)\setminus\{v,x,y\}$ such that $u\sim x$ and
$u\sim y$. Note that $G\neq K_n$, because $G$ has a vertex of
degree $2$ and $\beta(G)=3$. Thus, by Proposition~\ref{not twin},
there exists a vertex $w\in V(G)$ such that $w\sim u$ and $w\nsim
v$.
\par If $w$ is neither  adjacent to $x$ nor $y$, then
$r(x|\{v,u,w\})=(1,1,2)=r(y|\{v,u,w\})$, which contradicts
$res(G)=3$. Also, if $w$ is adjacent to both of vertices $x$ and
$y$, then $r(x|\{v,u,w\})=(1,1,1)=r(y|\{v,u,w\})$, which is a
contradiction. Hence, $w$ is adjacent to exactly one of the
vertices $x$ and $y$, say $x$. Since $\Delta(G)\leq3$, the graph
in Figure~\ref{12}(b) is an induced subgraph of $G$. Clearly, the
metric dimension of this subgraph is $2$. Therefore, $G$ has at least six
vertices.
\par If $|\Gamma_2(v)|=2$, then $w$ is a cut vertex in $G$,
because $\Delta(G)\leq3$. This contradiction implies that there
exists a vertex $z$ in $\Gamma_2(v)\setminus\{u,w\}$. Since
$\Delta(G)\leq3$, $z\sim y$. If $z\sim w$, then the graph in
Figure~\ref{34}(a) is an induced subgraph of $G$ which its metric dimension
is $2$. In this case, $G$ must has at least seven vertices and
consequently, $z$ is a cut vertex in $G$, which contradicts
Theorem~\ref{2connected}. Hence, $z\nsim w$. By
Theorem~\ref{2connected}, $\deg(z)\geq2$. Therefore, $z$ has a
neighbor in $\Gamma_3(v)$. If there exists a vertex $s\in
\Gamma_3(v)$ such that $s\sim z$ and $s\nsim w$, then
$r(v|\{y,z,s\})=(1,2,3)=r(u|\{y,z,s\})$, which contradicts
$res(G)=3$. Thus, $w$ is adjacent to all neighbors of $z$ in
$\Gamma_3(v)$. Since
$\Delta(G)\leq 3$, $z$ has exactly one neighbor in $\Gamma_3(v)$,
say $t$. Hence $\Gamma_3(v)=\{t\}$.
\par If $G$ has more vertices, then $t$ is a cut vertex in $G$,
which contradicts the $2$-connectivity of $G$. Therefore, $G$ is as
Figure~\ref{34}(b) and consequently, $\beta(G)=2$, which is a
contradiction. Thus, $G$ does not have any vertex of degree $2$.
}\end{proof}
\vspace{-.5cm}\begin{figure}[ht]
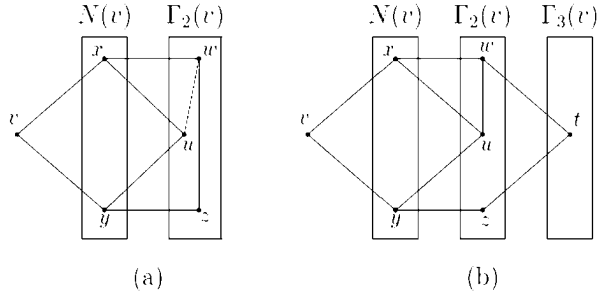
\hspace{2cm}
\vspace*{5cm}\special{em:graph fig2.bmp} \vspace*{1.5cm}
%\caption{ $(a)$ $\Delta(G)=5$, $(b)$ Neighbors of a
%vertex of degree $2$.\label{12}}
%\end{figure}
\vspace{-2cm}\caption{\label{34} The minimum degree of $G$ is more than $2$.}
\end{figure}
%\begin{lem}\label{kapa=kapa'}~\rm\cite{west} If $G$ is a
%$3$-regular graph, then  connectivity and edge-connectivity of $G$
%are equal.
%\end{lem}
\begin{theorem}\label{3-connected}
If $G$ is a randomly $3$-dimensional graph, then $G$ is
$3$-connected.
\end{theorem}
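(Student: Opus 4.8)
The plan is to argue by contradiction: suppose $G$ is not $3$-connected. Since $G$ is $2$-connected by Theorem~\ref{2connected} and $3$-regular by Theorem~\ref{3-regular}, there is a separating set $S=\{a,b\}$ with $|S|=2=k-1$. First I would invoke Theorem~\ref{|T|=k-1}: $G\setminus S$ has exactly two components $C_1$ and $C_2$, vertices of $V(G)\setminus S$ with equal representation with respect to $S$ lie in different components, and in particular $S$ resolves $C_1$ and resolves $C_2$ separately. Since $\beta(G)=3$, the set $S$ is not a resolving set, so there is at least one pair $x\in C_1$, $y\in C_2$ with $r(x|S)=r(y|S)$. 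Because $G$ is $3$-regular and $2$-connected, deleting $a$ forces $b$ to join $C_1$ to $C_2$, so $b$ (and likewise $a$) has at least one neighbour in each of $C_1,C_2$; together with $\deg(a)=\deg(b)=3$ this leaves only a short list of ways for $a$ and $b$ to attach to the two components, according to whether $a\sim b$ and to how the remaining one or two edges at $a$ and at $b$ are distributed, and in the extreme pattern one even gets a $2$-edge-cut separating one component from the rest.

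Next I would eliminate the small configurations. If a component, say $C_1$, has at most three vertices, then a direct check of the very few connected graphs on $\leq 3$ vertices, combined with $3$-regularity, forces either a vertex of degree less than $3$, or more than $4$ edges from $S$ to $C_1$ (impossible, since $a$ and $b$ together send at most $4$ edges across), or a pair of vertices of $C_1$ with identical neighbourhoods — i.e.\ twins, contradicting Proposition~\ref{not twin} — or the subgraph on $S\cup C_1$ being $K_4$, whence $a$ and $b$ would have no neighbour in $C_2$, contradicting $2$-connectivity; so $|C_i|\geq 4$. Likewise, if only one vertex of $C_1$ is adjacent to $S$, that vertex is a cut vertex of $G$, contradicting $2$-connectivity; hence each component has at least two boundary vertices (neighbours of $S$). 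Theorem~\ref{at most k-1 common neighbor} also caps the number of common neighbours of any two vertices at $2$, which further restricts the local picture around the boundary.

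The crux is then to rule out every remaining attachment pattern by producing a $3$-set of vertices that fails to resolve $G$, contradicting $res(G)=3$. For each pattern I would take these three vertices from $S$ together with one or two boundary vertices and, if needed, one vertex just past the boundary, and exhibit two vertices of $G$ — typically two boundary vertices of the same component, or $a$ together with a boundary vertex — having the same representation with respect to that set. The point is that all the relevant distances are forced to be small: every $C_1$–$C_2$ path runs through $a$ or $b$, $3$-regularity keeps the neighbourhoods of $a$, $b$, and of the boundary vertices under tight control, and the distances from the chosen probe vertices to the two candidates are pinned to $1$, $2$, or $3$, so that a collision becomes unavoidable. I expect this final case analysis to be the main obstacle: it is finite but not short, and the delicate part is, for each configuration, selecting the right probe vertices and verifying that the distance equalities actually hold — which is exactly where $3$-regularity and Theorem~\ref{|T|=k-1} do the work. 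Once every case yields a contradiction, $G$ has no $2$-cut and is therefore $3$-connected.
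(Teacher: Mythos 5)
Your setup is sound (a $2$-cut $S=\{a,b\}$ exists, Theorem~\ref{|T|=k-1} applies, $3$-regularity and $2$-connectivity force each of $a,b$ to have neighbours in both components), but the proof stops exactly where the real work begins. The decisive step --- ``for each attachment pattern exhibit a $3$-set that fails to resolve $G$'' --- is only announced, not carried out: no pattern is actually listed, no probe set is actually produced, and no distance collision is actually verified. Moreover, it is not at all clear that this step goes through as smoothly as you suggest. In a general vertex $2$-cut configuration the representations of two candidate vertices depend on the internal structure of the components, which is not determined by the attachment pattern alone; for instance, if $a$ has two neighbours $p,q$ in $C_1$, whether some probe inside $C_1$ separates $p$ from $q$ is not controlled by your hypotheses, so ``a collision becomes unavoidable'' is an assertion, not an argument. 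Also the preliminary claim $|C_i|\geq 4$ is not forced by degree counting alone (a triangle component with three edges to $S$ is consistent with $3$-regularity and the at-most-four cross edges), so even that reduction needs a genuine resolving/twin argument rather than a ``direct check.''

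The paper avoids this open-ended case analysis with one extra idea that you brush past in your parenthetical about ``the extreme pattern'': since $G$ is cubic, its edge-connectivity equals its connectivity, so a $2$-cut yields a $2$-edge-cut $\{xu,yv\}$. Taking the vertex cut $\{x,y\}$ arising from that edge cut pins the attachment completely --- $x$ and $y$ each have exactly one neighbour ($u$, resp.\ $v$) on the other side --- and then only two short collision arguments are needed (using the sets $\{x,u,s\}$ and $\{u,x,p\}$, where the cross-cut distances are genuinely forced to be $1,2,3$). To repair your proof you would either have to reconstruct this reduction to an edge cut, or actually enumerate and kill every attachment pattern of a general vertex $2$-cut, which requires new arguments beyond what you have written. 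As it stands, the proposal is a plausible plan with its central step missing.
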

\begin{proof}{Suppose on the contrary that, $G$ is not $3$-connected.
Therefore, by Theorem~\ref{2connected}, the connectivity of $G$ is
$2$. Since $G$ is $3$-regular, (by Theorem~$4.1.11$ in~\cite{west},) the
edge-connectivity of $G$ is also $2$. Thus, there exists a minimum
edge cut in $G$ of size $2$, say $\{xu,yv\}$. Let $H$ and $H_1$ be
components of $G\setminus\{xu,yv\}$ such that $x,y\in V(H)$ and $u,v\in
V(H_1)$. Note that, $x\neq y$ and $u\neq v$, because $G$ is
$2$-connected. Since $G$ is $3$-regular, $|H|\geq3$ and
$|H_1|\geq3$. Therefore, $\{x,y\}$ is a separating set in $G$ and
components of $G\setminus\{x,y\}$ are $H_1$ and
$H_2=H\setminus\{x,y\}$. Hence, each of vertices $x$ and $y$ has
exactly one neighbor in $H_1$, $u$ and $v$, respectively. Since
$G$ is $3$-regular, $x$ has at most two neighbors in $H_2$ and $u$
has exactly two neighbors $s,t$ in $H_1$. Thus, $u$ has a neighbor
in $H_1$  other than $v$, say $s$. Therefore, $s\nsim x$ and
$s\nsim y$.
\par If $x$ has two neighbors $p,q$ in $H_2$, then
$r(p|\{x,u,s\})=(1,2,3)=r(q|\{x,u,s\})$, which contradicts
$res(G)=3$. Consequently, $x$ has exactly one neighbor in $H_2$,
say $p$. Since $G$ is $3$-regular, $x\sim y$ and hence, $y$ has
exactly one neighbor in  $H_2$. Note that $p$ is not the unique
neighbor of $y$ in $H_2$, because $G$ is $2$-connected. Thus,
$d(t,p)=3$ and hence, $r(s|\{u,x,p\})=(1,2,3)=r(t|\{u,x,p\})$,
which is impossible. Therefore, $G$ is $3$-connected.
}\end{proof}
\begin{pro}\label{N(v) independent}
If $G\neq K_4$ is a randomly $3$-dimensional graph, then for each
$v\in V(G)$, $N(v)$ is an independent set in $G$.
\end{pro}
\begin{proof}{Suppose on the contrary that there exists a vertex
$v\in V(G)$, such that $N(v)$ is not an independent set in $G$. By
Theorem~\ref{3-regular}, $\deg(v)=3$.  Let $N(v)=\{u_1,u_2,u_3\}$.
Since $G\neq K_4$, the induced subgraph $\langle N(v)\rangle$ of
$G$ has one or two edges. If $\langle N(v)\rangle$  has  two
edges, then by symmetry, let $u_1\sim u_2$, $u_2\sim u_3$ and
$u_1\nsim u_3$. Since $G$ is $3$-regular, the set $\{u_1,u_3\}$ is
a separating set in $G$, which contradicts
Theorem~\ref{3-connected}. This argument implies that for
each $s\in V(G)$, $\langle N(s)\rangle$ dos not have two edges.
Hence, $\langle N(v)\rangle$  has one edge, say $u_1u_2$. Since
$G$ is $3$-regular, there are exactly four edges between $N(v)$
and $\Gamma_2(v)$. Therefore, $\Gamma_2(v)$ has at most four
vertices, because each vertex of $\Gamma_2(v)$ has a neighbor in
$N(v)$. On the other hand, $3$-regularity of $G$ forces
$\Gamma_2(v)$ has at least two vertices. Thus, one of the
following cases can happen.
\par\noindent 1. $|\Gamma_2(v)|=2$. In this case
$\Gamma_3(v)=\emptyset$, otherwise $\Gamma_2(v)$ is a separating
set of size $2$, which is impossible. Consequently, $G$ is as
Figure~\ref{56}(a). Hence, $\beta(G)=2$. But, by assumption
$\beta(G)=3$, a contradiction.
\par\noindent 2. $|\Gamma_2(v)|=3$. Let $\Gamma_2(v)=\{x,y,z\}$ and
$N(u_3)\cap\Gamma_2(v)=\{y,z\}$. Also, by symmetry, let $u_1\sim
x$, because each vertex of $\Gamma_2(v)$ has a neighbor in $N(v)$.
Then, the last edge between $N(v)$ and $\Gamma_2(v)$ is one of
$u_2x$, $u_2y$, and $u_2z$. But, $u_2x\notin E(G)$, otherwise
$\langle N(u_2)\rangle$ has two edges. Thus, by symmetry, we can
assume that $u_2y\in E(G)$ and $u_2z\notin E(G)$. Since
$res(G)=3$, we have $y\sim z$, otherwise
$r(v|\{u_2,u_3,z\})=(1,1,2)=r(y|\{u_2,u_3,z\})$, which is
impossible. For $3$-regularity of $G$, $\Gamma_3(v)\neq\emptyset$.
Hence, $\{x,z\}$ is a separating set of size $2$ in $G$, which
contradicts Theorem~\ref{3-connected}.
\par\noindent 3. $|\Gamma_2(v)|=4$. Let $\Gamma_2(v)=\{w,x,y,z\}$
and $u_1\sim w$, $u_2\sim x$, $u_3\sim y$, and $u_3\sim z$. If
$x\nsim y$ and $x\nsim z$, then $d(y,u_2)=3=d(z,u_2)$ and it
yields $r(y|\{v,u_2,u_3\})=(2,3,1)=r(z|\{v,u_2,u_3\})$. Therefore,
$G$ has at least one of the edges $xy$ and $xz$. If $G$ has both
$xy$ and $xz$, then  $r(y|\{v,x,u_3\})=r(z|\{v,x,u_3\})$. Thus,
$G$ has exactly one of the edges $xy$ and $xz$, say $xy$. On the
same way, $G$ has exactly one of the edges $wy$ and $wz$. If
$w\sim y$, then $r(x|\{v,u_3,y\})=(2,2,1)=r(w|\{v,u_3,y\})$.
Hence, $w\nsim y$ and $w\sim z$. Note that, $x\nsim w$, otherwise
$r(u_2|\{u_1,x,u_3\})=(1,1,2)=r(w|\{u_1,x,u_3\})$.  Therefore,
$N(w)\cap[\Gamma_1(v)\cup\Gamma_2(v)]=\{u_1,z\}$. Since $G$ is
$3$-regular, $\Gamma_3(v)\neq\emptyset$. If $z\sim y$, then
$\{w,x\}$ is a separating set in $G$ which is impossible.
 Thus, $z$ has a neighbor in $\Gamma_3(v)$,
say $u$. If $u\nsim w$, then $d(w,u)=2=d(u_3,u)$ which implies
that $r(u_3|\{u_2,z,u\})=(2,1,2)=r(w|\{u_2,z,u\})$. Hence, $u\sim
w$ and it yields $r(w|\{u,v,x\})=r(z|\{u,v,x\})$. Consequently,
$N(v)$ is an independent set in $G$. }\end{proof}
\vspace{-.5cm}
\begin{figure}[ht]
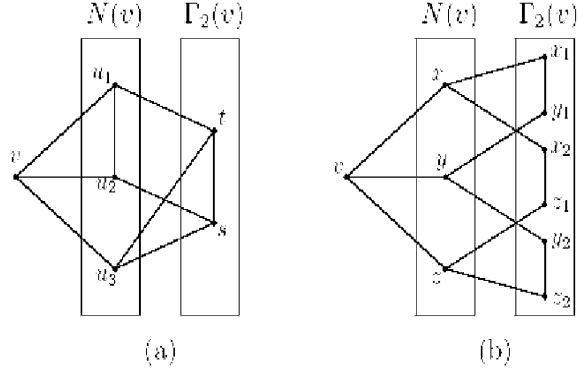
\hspace{3cm}
\vspace*{5cm}\special{em:graph fig3.bmp} \vspace*{1.5cm}
%%\caption{ $(a)$ $\Delta(G)=5$, $(b)$ Neighbors of a
%%vertex of degree $2$.\label{12}}
%%\end{figure}
%\caption{\label{34} The minimum degree of $G$ is more than $2$.}
%\end{figure}
\vspace{-1.2cm}\caption{\label{56} Two graphs with metric dimension $2$.}
\end{figure}
\begin{theorem}\label{G=K4} If $G$ is a randomly $3$-dimensional
graph, then $G=K_4$.
\end{theorem}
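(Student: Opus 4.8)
The plan is to argue by contradiction. Suppose $G$ is a randomly $3$-dimensional graph with $G\neq K_4$; then by Theorems~\ref{3-regular} and~\ref{3-connected} and Proposition~\ref{N(v) independent}, $G$ is $3$-regular, $3$-connected, and has every open neighborhood independent, so in particular $G$ is triangle-free. Everything will hinge on one extra structural fact — that $G$ has girth at least $5$ — after which a short counting argument together with Theorem~\ref{k>(n-1)/d} closes the case.

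First I would rule out $4$-cycles. Suppose $(v,u_1,w,u_2,v)$ is a $4$-cycle and let $u_3$ be the third neighbor of $v$ (it exists since $\deg v=3$); note that $u_3\neq w$, since otherwise $u_3\sim u_1$ would create a triangle through $v$, and $u_3\notin\{v,u_1,u_2,w\}$, so $\{v,w,u_3\}$ is a genuine $3$-set. Then $d(u_1,v)=d(u_2,v)=d(u_1,w)=d(u_2,w)=1$ and, since $u_1\nsim u_3$ and $u_2\nsim u_3$, $d(u_1,u_3)=d(u_2,u_3)=2$ via $v$. Hence $r(u_1\,|\,\{v,w,u_3\})=(1,1,2)=r(u_2\,|\,\{v,w,u_3\})$, which contradicts $res(G)=3$. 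So $G$ has no triangle and no $4$-cycle.

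Next I would extract the neighborhood count. For any vertex $v$ with $N(v)=\{u_1,u_2,u_3\}$, each $u_i$ has two neighbors other than $v$, all of them in $\Gamma_2(v)$; since $G$ has no triangle and no $4$-cycle, no two of $u_1,u_2,u_3$ share such a neighbor, so these six vertices are distinct and every vertex of $\Gamma_2(v)$ is one of them. Thus $|\Gamma_1(v)|=3$ and $|\Gamma_2(v)|=6$ (and $\Gamma_2(v)\neq\emptyset$, since otherwise $n=4$ and $G=K_4$). Finally I would let $d$ be the diameter of $G$ and fix a vertex $v$ with eccentricity $d$. If $d=1$ then $G$ is complete and $3$-regular, i.e. $G=K_4$, excluded. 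If $d=2$ then $n=1+3+6=10$, contradicting the bound $n\le 3d+1=7$ of Theorem~\ref{k>(n-1)/d}. If $d\ge 3$, then for every $i$ with $1\le i\le d-1$ the layer $\Gamma_i(v)$ separates $\bigcup_{j<i}\Gamma_j(v)$, which contains $v$, from $\bigcup_{j>i}\Gamma_j(v)$, which contains $\Gamma_d(v)\neq\emptyset$, because edges join only equal or consecutive layers; by $3$-connectivity $|\Gamma_i(v)|\ge 3$, so with $|\Gamma_1(v)|=3$, $|\Gamma_2(v)|=6$ and $|\Gamma_d(v)|\ge 1$ we obtain $n\ge 1+3+6+3(d-3)+1=3d+2$, again contradicting $n\le 3d+1$. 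In every case we reach a contradiction, so $G=K_4$.

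I expect the delicate point to be the $4$-cycle step: one must check that $\{v,w,u_3\}$ really consists of three distinct vertices and that all of the relevant distances are forced, which is exactly where triangle-freeness and $3$-regularity enter. Once girth at least $5$ is available, the rest is routine — the only genuine observation is that a distance layer strictly between $v$ and a farthest vertex is a separating set, and the numerical contradiction then comes straight from Theorem~\ref{k>(n-1)/d}. Should the $4$-cycle argument prove awkward, the case $d=2$ could instead be disposed of by recognizing $G$ as the Petersen graph and exhibiting a non-resolving triple, but the counting route avoids any appeal to uniqueness of that graph.
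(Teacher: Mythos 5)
Your proof is correct, and after the common preliminaries it takes a genuinely different route from the paper. Both arguments start from the same platform (Theorems~\ref{3-regular} and~\ref{3-connected}, Proposition~\ref{N(v) independent}, and $res(G)=3$), and your $4$-cycle-killing step is the same kind of move as the paper's first step (the paper shows no vertex of $\Gamma_2(v)$ has two neighbours in $N(v)$ via the triple $\{v,a,z\}$; you show girth at least $5$ via $\{v,w,u_3\}$ -- both produce the repeated representation $(1,1,2)$). The divergence is in the endgame: the paper continues the local analysis, proving that $\langle\Gamma_2(v)\rangle$ is a perfect matching (Figure~\ref{56}(b)) and then hunting down explicit non-resolving triples involving $\Gamma_3(v)$, whereas you switch to a global count. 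From girth $\geq 5$ and $3$-regularity you get $|\Gamma_1(v)|=3$, $|\Gamma_2(v)|=6$, and then Theorem~\ref{k>(n-1)/d} (which gives $n\leq 3d+1$ when $k=3$) clashes with $n=10$ when $d=2$ and, for $d\geq3$, with $n\geq 1+3+6+3(d-3)+1=3d+2$, the intermediate layers having size at least $3$ because each $\Gamma_i(v)$, $1\leq i\leq d-1$, is a separating set and $G$ is $3$-connected. I checked the delicate points you flagged: $u_3\neq w$ and the forced distances in the $4$-cycle step do follow from the independence of $N(v)$, and the Moore-type layer count and separation argument are sound. Your approach has the aesthetic advantage of actually using Theorem~\ref{k>(n-1)/d}, which the paper proves in Section~\ref{main} but never invokes in Section~\ref{3dimensional}, and it avoids the paper's case-by-case inspection of $\Gamma_2(v)$ and $\Gamma_3(v)$; in effect your $d=2$ case excludes the Petersen-graph configuration by counting rather than by exhibiting a bad triple. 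The paper's proof, in exchange, is self-contained within the third neighbourhood and does not need the diameter bound or the connectivity-based layer estimate.
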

\begin{proof}{Suppose on the contrary that $G$ is a randomly
$3$-dimensional graph and $G\neq K_4$.  Let $v\in V(G)$ is an
arbitrary fixed vertex and $N(v)=\{x,y,z\}$. By
Proposition~\ref{N(v) independent}, $N(v)$ is an independent set
in $G$. Since $G$ is $3$-regular, there are six edges between
$N(v)$ and $\Gamma_2(v)$. If a vertex  $a\in\Gamma_2(v)$ is
adjacent to $x$ and $y$, then
$r(x|\{v,a,z\})=(1,1,2)=r(y|\{v,a,z\})$, which is impossible.
Therefore, by symmetry, each vertex of $\Gamma_2(v)$ has exactly
one neighbor in $N(v)$ and hence, $\Gamma_2(v)$ has exactly six
vertices.
%Let $H=\langle\Gamma_2(v)\rangle$.
If there exists a vertex $a\in \Gamma_2(v)$ with no neighbor in
$\Gamma_2(v)$, then by symmetry, let $a\sim z$. Thus,
$r(x|\{v,z,a\})=(1,2,3)=r(y|\{v,z,a\})$. Also, if there exists a
vertex $a\in \Gamma_2(v)$ with two neighbors $b$ and $c$ in
$\Gamma_2(v)$, by symmetry, let $a\sim z$, $b\nsim z$ and $c\nsim z$. Then,
%because by Proposition~\ref{N(v) independent},
%$N(a)$ is an independent set in $G$. Hence,
$r(b|\{v,z,a\})=(2,2,1)=r(c|\{v,z,a\})$. These contradictions
imply that $\Gamma_2(v)$ is a matching in $G$. Since all neighbors
of each vertex of $G$ constitute an independent set in $G$, the
induced subgraph $\langle\{v\}\cup N(v)\cup\Gamma_2(v)\rangle$ of
$G$ is as Figure~\ref{56}(b). Since $G$ is $3$-regular,
$\Gamma_3(v)\neq\emptyset$ and each vertex of $\Gamma_2(v)$ has
one neighbor in $\Gamma_3(v)$. Let $u\in\Gamma_3(v)$ be the
neighbor of $x_1$. Thus, $y_1\nsim u$. If $y_1$ and $z_2$ have no
common neighbor in $\Gamma_3(v)$, then
$r(x|\{x_1,u,z_2\})=(1,2,3)=r(y_1|\{x_1,u,z_2\})$. Therefore,
$y_1$ and $z_2$ have a common neighbor in $\Gamma_3(v)$, say $w$.
Consequently, $r(y|\{v,x,w\})=(1,2,2)=r(z|\{v,x,w\})$. This
contradiction implies that $G=K_4$. }\end{proof} The next
corollary characterizes all randomly $k$-dimensional graphs.
\begin{cor} Let $G$ be a graph with $\beta(G)=k>1$. Then, $G$ is a
randomly $k$-dimensional graph if and only if $G$ is a complete
graph $K_{k+1}$ or an odd cycle.
\end{cor}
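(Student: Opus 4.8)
The plan is to assemble the structural results already proved rather than to do anything new. Write $n=|V(G)|$ and recall the hypothesis $\beta(G)=k>1$.

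For the forward implication, assume $G$ is randomly $k$-dimensional. Theorem~\ref{bonds on k} forces $k\le 3$ or $k\ge n-1$. Since $\beta(G)\le n-1$ always holds, the case $k\ge n-1$ gives $k=n-1$, hence $\beta(G)=n-1$, and Theorem~\ref{B=1,B=n-1}(ii) yields $G=K_n=K_{k+1}$. In the remaining range $2\le k\le 3$: if $k=3$, Theorem~\ref{G=K4} gives $G=K_4=K_{k+1}$; if $k=2$, the characterization of randomly $2$-dimensional graphs in~\cite{basis} gives that $G$ is an odd cycle. This exhausts all cases.

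For the converse I would check directly that both families qualify. If $G=K_{k+1}$, then $\beta(G)=k$ by Theorem~\ref{B=1,B=n-1}(ii); moreover any $k$-subset $W=\{w_1,\ldots,w_k\}$ of $V(G)$ resolves $G$, since the unique vertex outside $W$ has representation $(1,1,\ldots,1)$ while each $w_i$ carries a $0$ in coordinate $i$, so the $k+1$ representations are pairwise distinct. As $|W|=k=\beta(G)$ and $W$ resolves, $W$ is a basis, so $G$ is randomly $k$-dimensional. If $G$ is an odd cycle, then $\beta(G)=2$ and by~\cite{basis} every $2$-subset of $V(G)$ is a basis, so $G$ is randomly $2$-dimensional, i.e. randomly $k$-dimensional with $k=2$.

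There is no real obstacle at this stage: the substantive work — excluding $4\le k\le n-2$ via Theorem~\ref{bonds on k} and excluding every randomly $3$-dimensional graph other than $K_4$ via Theorem~\ref{G=K4} — is already complete, so the corollary is essentially bookkeeping. The only mild care needed is to line up the two stated descriptions with the case split and to record the easy converse verifications, noting in particular that $K_3=C_3$ sits consistently in both families ($\beta=2=n-1$), so no clash arises.
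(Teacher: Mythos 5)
Your proposal is correct and follows essentially the same route as the paper: it assembles Theorem~\ref{bonds on k}, Theorem~\ref{B=1,B=n-1}, the characterization of randomly $2$-dimensional graphs from~\cite{basis}, and Theorem~\ref{G=K4}, exactly as the paper's discussion does. The only addition is your explicit verification of the converse (that $K_{k+1}$ and odd cycles are indeed randomly $k$-dimensional), which the paper leaves implicit and which you handle correctly.
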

%%%%%%%%%%%%%%%%%%%%%%%%%%%%%%%%%%%%%%%%%%%%%%%%%%%%%%%%%%%%%%%%%%%%%%%%%%%%%%%%%%%%%%%%%%%%%%%%%%%%%%%%%
%%%%%%%%%%%%%%%%%%%%%%%%%%%%%%%%%%%%%%%%%%%%%%%%%%%%%%%%%%%%%%%%%%%%%%%%%%%%%%%%%%%%%%%%%%%%%%%%%%%%%%%%%%%%%%
%%%%%%%%%%%%%%%%%%%%%%%%%%%%%%%%%%%%%%%%%%%%%%%%%%%%%%%%%%%%%%%%%%%%%%%%%%%%%%%%%%%%%%%%%%%%%%%%%%%%%%%%

\end{document}